\newtheorem{thm}{Theorem}[section]
\newtheorem{lem}[thm]{Lemma}
\newtheorem{prop}[thm]{Proposition}
\newtheorem{prob}[thm]{Problem}
\newtheorem{conj}[thm]{Conjecture}
\theoremstyle{definition}
\numberwithin{equation}{section}
\newcommand{\czero}{\ensuremath{c_0}}
\newcommand{\dual}[1]{\ensuremath{{#1}^*}}
\newcommand{\Gdelta}{\ensuremath{G_\delta}}
\newcommand{\ip}[2]{\ensuremath{\langle{#1}\,,\,{#2}\rangle}}
\newcommand{\lint}[4]{\ensuremath{\int_{#1}^{#2}{#3}\:\mathrm{d}{#4}}}
\newcommand{\lp}[1]{\ensuremath{\ell_{#1}}}
\newcommand{\mapping}[3]{\ensuremath{{#1}:{#2}\longrightarrow{#3}}}
\newcommand{\mi}{\ensuremath{\mathrm{i}}}
\newcommand{\nat}{\mathbb{N}}
\newcommand{\norm}[1]{\ensuremath{\left|\left|{#1}\right|\right|}}
\newcommand{\normdot}{\ensuremath{||\cdot||}}
\newcommand{\oneton}[2]{\ensuremath{{#1}_1,\ldots,{#1}_{#2}}}
\newcommand{\orb}[2]{\ensuremath{\orbstart({#1},{#2})}}
\newcommand{\pnorm}[2]{\ensuremath{||{#1}||_{#2}}}
\newcommand{\pnormdot}[1]{\ensuremath{||\cdot||_{#1}}}
\newcommand{\real}{\mathbb{R}}
\newcommand{\setcomp}[2]{\ensuremath{\{{#1}\;:\;\,{#2}\}}}
\newcommand{\sph}[1]{\ensuremath{S_{#1}}}
\DeclareMathOperator{\aspan}{span} %
\DeclareMathOperator{\orbstart}{orb} %
\begin{document}
\title{Operator machines on directed graphs}
\begin{abstract}
We show that if an infinite-dimensional Banach space $X$ has a
symmetric basis then there exists a bounded, linear operator
$\mapping{R}{X}{X}$ such that the set
\[
A = \setcomp{x \in X}{\norm{R^n(x)} \rightarrow \infty}
\]
is non-empty and nowhere dense in $X$. Moreover, if $x \in
X\setminus A$ then some subsequence of $(R^n(x))_{n=1}^\infty$
converges weakly to $x$. This answers in the negative a recent
conjecture of Pr\v{a}jitur\v{a}. The result can be extended to any
Banach space containing an infinite-dimensional, complemented
subspace with a symmetric basis; in particular, all `classical'
Banach spaces admit such an operator.
\end{abstract}

\author{Petr H\'{a}jek}
\address{Institute of Mathematics of the AS CR\\ \v{Z}itn\'{a} 25\\ CZ - 115 67 Praha 1\\ Czech Republic}
\email{hajek@math.cas.cz}
\author{Richard J.\ Smith}
\email{smith@math.cas.cz}
\date{\today}
\thanks{Both authors are supported by Grant A 100190801 and Institutional Research Plan
AV0Z10190503}%
\subjclass[2000]{47A05}%
\keywords{Orbits of operators}

\maketitle

\section{Introduction}

Given a Banach space $X$, a bounded linear operator $T$ on $X$ and
$x \in X$, we say that the {\em orbit of }$x$ {\em with respect to}
$T$ is the set
$$
\orb{x}{T} \;=\; \setcomp{T^n(x)}{n \geq 0}.
$$
It is well known (cf \cite{r:69}) that if $X$ is finite-dimensional
then $\orb{x}{T}$ is `regular', in the sense that either
$\norm{T^n(x)} \rightarrow 0$, $\norm{T^n(x)} \rightarrow \infty$,
or there exists $M > 0$ such that $M^{-1} \leq \norm{T^n(x)} \leq M$
for all $n$. The infinite-dimensional situation is very different.
Rolewicz provided simple examples of operators on
infinite-dimensional spaces which admit {\em hypercyclic} vectors,
that is, vectors with norm-dense orbits.

The study of orbits is connected to the invariant subspace problem.
Indeed, an operator $T$ on a Banach space $X$ has no non-trivial,
closed, invariant subspaces if and only if $\aspan \orb{x}{T}$ is
norm-dense for every non-zero $x \in X$.

There is a considerable body of literature on operators which admit
hypercyclic vectors. In this note, we study operators with more
regular orbits, in particular, those which tend to infinity. This
type of orbit has received attention from several authors. For
example, in a systematic study of orbits of operators on Hilbert
space, Beauzamy provides several sufficient conditions for $T$ to
admit a norm-dense set of points $x$ satisfying $\norm{T^n(x)}
\rightarrow \infty$ \cite[Chapter III]{beauzamy:88}. Broadly
speaking, these conditions are based on the growth of the sequence
$(\norm{T^n})_{n=1}^\infty$. For example, if $\sum_{n=1}^\infty
\norm{T^n}^{-1} < \infty$ then $T$ admits such a dense set. Sharp
estimates of this nature applying to general Banach spaces are given
in \cite{mv:09}. We refer the reader to the surveys
\cite{muller:01,muller:04} for additional results on this topic.

A given operator can have both regular and highly irregular orbits,
and the exact behaviour of $\orb{x}{T}$, as $x$ ranges over $X$, is
not so easy to determine. In \cite{p:09}, Pr\v{a}jitur\v{a} makes
the following conjecture.

\begin{conj}[{\cite[Conjecture 2.9]{p:09}}]\label{praconj}
Let $T$ be an operator $T$ on a Banach space and let
$$
A_T \;=\; \setcomp{x \in X}{\norm{T^n(x)} \rightarrow \infty}.
$$
Then $A_T$ is norm-dense whenever $A_T$ is non-empty.
\end{conj}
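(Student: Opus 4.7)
The plan is to construct $R$ explicitly as a weighted transfer operator on a countable directed graph $G = (V, E)$: index the symmetric basis of $X$ by $V$, writing $e_v$ for each basis vector, and set
\[
R(e_v) \;=\; \sum_{w \,:\, (v,w) \in E} \lambda_{v,w}\, e_w,
\]
with the weights $\lambda_{v,w}$ and the shape of $G$ to be chosen. The graph will be arranged so that a single distinguished vertex $v_*$ generates a norm-divergent orbit, while every other orbit is bounded and weakly recurrent to its starting point.

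The first step is to include, starting at $v_*$, an infinite directed path $v_* \to v_1 \to v_2 \to \cdots$ whose cumulative edge weights diverge; this guarantees $\|R^n(e_{v_*})\| \to \infty$ and hence $A_R \ne \emptyset$. The second step, which carries the bulk of the construction, is to organise the remainder of the graph so that from any vertex $v \ne v_*$ the iterates $R^n(e_v)$ disperse norm mass over blocks of basis vectors of uniformly bounded size with disjoint supports, while $e_v$ itself reappears as a component of $R^{n_k}(e_v)$ along some infinite subsequence $(n_k)$. The symmetry of the basis is essential here: norms of disjointly-supported blocks of fixed size and fixed coefficient-profile are uniformly controlled, so $\|R^n(e_v)\|$ stays bounded, and one can then extract a weak limit $R^{n_k}(e_v) \rightharpoonup e_v$ (using that weak convergence in a space with symmetric basis reduces to boundedness plus coordinatewise convergence). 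Passing to finite linear combinations, taking a diagonal subsequence, and combining with a density argument under uniform norm bounds, will yield $R^{n_k}(x) \rightharpoonup x$ for every $x$ whose orbit is bounded, and in particular for every $x \in X \setminus A_R$.

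The third step, which I expect to be the principal obstacle, is to upgrade this non-density of $A_R$ to \emph{nowhere}-density. A small perturbation of a bounded-orbit vector $y$ can in principle acquire nonzero mass on the divergent trunk through $v_*$, so one must tune the graph so that the channels from the rest of the graph into this trunk are strictly throttled. Concretely, I would try to show that the set
\[
\{\, x \in X : \liminf_n \|R^n(x)\| < \infty \,\}
\]
is a norm-dense and, ideally, \emph{open} subset of $X$: if so, it is disjoint from $A_R$, and the weak-recurrence argument then applies uniformly on this set, giving an open dense set disjoint from $\overline{A_R}$. The delicate technical content of the proof therefore lies in estimating, for a generic perturbation, precisely how much mass can leak onto the divergent trunk in finitely many iterates, and in arranging the weights so that this leakage remains within a tolerance compatible with boundedness of the orbit along some subsequence.
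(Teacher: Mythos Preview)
Your plan has a structural flaw that cannot be repaired by tuning weights. Suppose, as you propose, that $e_{v_*}\in A_R$ while a dense set $D$ of vectors (containing at least the finitely supported elements of $\ker e_{v_*}^*$) has \emph{bounded} orbits. Then for every $y\in D$ and every $\varepsilon\neq 0$,
\[
\norm{R^n(y+\varepsilon e_{v_*})}\;\geq\;|\varepsilon|\,\norm{R^n(e_{v_*})}-\sup_m\norm{R^m(y)}\;\longrightarrow\;\infty,
\]
so $y+\varepsilon e_{v_*}\in A_R$. The set $\{y+\varepsilon e_{v_*}:y\in D,\ \varepsilon\neq 0\}$ is dense in $X$, hence $A_R$ is dense, not nowhere dense. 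In other words, whenever $A_R\neq\emptyset$, the set of vectors with \emph{bounded} orbit can never be dense; your step two and the desired conclusion are mutually exclusive. The same reasoning shows that your hoped-for set $\{x:\liminf_n\norm{R^n(x)}<\infty\}$ cannot be open and dense: it would then contain a vector of the form $y+\varepsilon e_{v_*}$ with $y$ of bounded orbit, contradicting the display above. The ``leakage'' you worry about in step three is not from the rest of the graph into the trunk, but the reverse: an arbitrarily small component along the divergent direction contaminates every bounded-orbit vector.

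The paper's construction avoids this trap by making membership in $A_R$ depend on a \emph{nonlinear} condition on a finite-dimensional projection: one fixes a $d$-dimensional subspace $Y$ (with $d\geq 2$), a closed symmetric nowhere-dense set $E\subseteq S_Y$, and arranges that $x\in A_R$ if and only if $J(x)/\norm{J(x)}\in E$. The infinitely many remaining coordinates are consumed by a family of finite cyclic ``carousel'' blocks, one for each point of a sequence of nets of $S_Y\setminus E$, each block fed at a rate inversely proportional to the $\rho$-distance of the net point from $E$. Crucially, for $J(x)\notin E$ the orbit is \emph{not} bounded; one only obtains a subsequence $R^{n_i}(x)\to x$ weakly, by synchronising the periods of the carousels. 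This is exactly what the obstruction above forces: outside $A_R$ you must allow unbounded orbits with bounded subsequences. Your single divergent path and one distinguished vertex cannot encode such directional (spherical) information; at minimum you need a two-dimensional ``control'' subspace and a mechanism that makes the growth rate depend on the direction of the projection, not merely on whether one coordinate vanishes.
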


Of course, if $\norm{T^n(x)} \rightarrow \infty$ for some $x$ then
$(\norm{T^n})_{n=1}^\infty$ is unbounded, so by the uniform
boundedness principle, the set of $y$ with the property that $\sup_n
\norm{T^n(y)} = \infty$ is a norm-dense $\Gdelta$ in $X$. However,
this clearly does not say anything about whether $\norm{T^n(y)}$
tends to infinity or not. Indeed, the weighted backwards shift
operator $T$ on $\lp{p}$, $1 \leq p < \infty$, given by
$$
T(e_i) = \left\{
\begin{array} {l@{\quad}l}
(i/(i-1))^{\frac{1}{p}}e_{i-1} & \mbox{if } i > 1\\
0 & \mbox{if }i=1\\
\end{array} \right.
$$
satisfies $\norm{T^n} \rightarrow \infty$, but $\norm{T^n(x)}
\not\rightarrow \infty$ for all $x$ \cite[Example 4]{mv:09}. In
\cite[pp. 66--68]{beauzamy:88}, there is an example of an operator
$T$ on Hilbert space satisfying $\norm{T^n} \rightarrow \infty$, but
$\inf_n \norm{T^n(x)} = 0$ for all $x$.

The object of this note is to show that there is a wide class of
Banach spaces which admit operators failing Conjecture
\ref{praconj}. In fact, by constructing a range of suitable
operators, we can impose a reasonable degree of control over the
structure of $A_T$. Clearly, $A_T$ is always radial, in the sense
that if $x \in A_T$ and $\lambda \neq 0$ then $\lambda x \in A_T$.
Thus we need only trouble ourselves with what happens to points in
the unit sphere $\sph{X}$.

We shall consider both real and complex Banach spaces. Recall that
if a Schauder basis $(e_i)_{i=1}^\infty$ of $X$ is symmetric then
there exists an equivalent norm $\normdot$ on $X$ with the property
that
$$
\norm{\sum_{i=1}^\infty x_i e_i} \;=\; \norm{\sum_{i=1}^\infty
\lambda_i x_i e_{\pi(i)}}
$$
whenever $\pi$ is a permutation of $\nat$ and $|\lambda_i| = 1$ for
all $i$. Such a norm is called {\em symmetric}; hereafter, whenever
we have a Banach space $X$ with a symmetric basis, we shall assume
that $X$ is infinite-dimensional and that the associated norm is
symmetric. In addition, we shall say that a subset $E$ of a Banach
space is {\em symmetric} if $\lambda x \in E$ whenever $x \in E$ and
$|\lambda| = 1$. Here follows our main result.

\begin{thm}
\label{master} Let $X$ have a symmetric basis with norm $\normdot$
and suppose that $Y \subseteq X$ is a subspace of dimension $d$,
where $2 \leq d < \infty$. Moreover, let $E \subseteq \sph{Y}$ be
closed and symmetric, and let $J$ be a projection of $X$ onto $Y$.
Then there exists an operator $\mapping{R}{X}{X}$ with two
properties:
\begin{enumerate}
\item if $J(x) \in E$ then $\norm{R^n(x)} \rightarrow \infty$;
\item if $J(x) \in \sph{Y}\setminus E$ then there is a subsequence
$(R^{n_i}(x))$ of $(R^n(x))$ such that $R^{n_i}(x) \rightarrow x$
weakly.
\end{enumerate}
\end{thm}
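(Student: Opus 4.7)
My approach follows what the title suggests: build $R$ as a sum of elementary edge-operators on a carefully chosen directed graph whose vertices are isometric block-copies of $Y$ inside $X$. The symmetry of the norm allows one to fix a sequence of isometric embeddings $\phi_n : Y \to X$ supported on pairwise disjoint finite blocks of the basis; each $\phi_n(Y)$ is a ``vertex'', and the projection $J$ funnels an arbitrary $x \in X$ through the distinguished entry vertex $\phi_0(Y)$, where I may take $\phi_0$ to be the inclusion $Y \hookrightarrow X$.

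Since $\sph{Y}$ is compact metric and $E$ is closed, the open set $\sph{Y} \setminus E$ admits a countable dense subset $(y_k)_{k \ge 1}$ together with neighbourhoods $B(y_k, \varepsilon_k)$ whose closures miss $E$. The graph I would build has two kinds of features. First, an infinite ``escape ray'' along which consecutive edges act as weighted shifts $\phi_n(y) \mapsto \lambda_n \phi_{n+1}(y)$, the weights being chosen so that $\prod_{n \le N}\lambda_n \to \infty$. Second, for each $k$ a finite cycle of length $N_k$ attached to the entry vertex, designed so that one full traversal maps $\phi_0(y_k)$ to an approximate copy of $y_k$ on a block supported deep inside the basis, while the cycle's contribution becomes negligible for directions bounded away from $y_k$ (in particular for $y \in E$). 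The operator $R$ is then the sum of all edge operators, with convergence to a bounded operator on $X$ secured by the unconditional, symmetric structure of the basis together with a rapid decay of edge-operator norms further along the graph.

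For $J(x) \in E$, the cycle contributions to $R^n(x)$ can be kept uniformly small compared to the escape-ray contribution, so the weight products force $\norm{R^n(x)} \to \infty$. For $J(x) \in \sph{Y} \setminus E$ lying in some $B(y_k, \varepsilon_k)$, the $k$-th cycle dominates, and after $n$ complete revolutions $R^{n N_k}(x)$ differs from $x$ only by a vector supported on blocks whose positions drift to infinity in the basis; bounded, disjointly-supported block vectors in a symmetric basis tend weakly to zero outside the Schur-type exception of $\lp{1}$, which I would handle separately by shrinking cycle weights so that the remainder tends to zero in norm. A subsequence of iterates therefore returns weakly to $x$.

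The principal obstacle is that $R$ is a single linear operator and cannot explicitly branch its behaviour depending on where $J(x)$ lies relative to $E$: the required dichotomy must emerge purely from the linear combination of all the edge operators. The delicate matching of cycle-domination near each $y_k$ against escape-domination on $E$, performed simultaneously for infinitely many cycles and verified alongside the convergence of the formal sum to a genuine bounded operator, is the technical heart of the argument, and it is precisely here that the symmetry and unconditionality of the basis are indispensable for controlling the interaction of infinitely many disjointly supported block operators.
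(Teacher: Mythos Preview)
Your proposal has a genuine gap at the escape ray. The ray is a linear weighted forward shift $\phi_n(y)\mapsto\lambda_n\phi_{n+1}(y)$ acting on the whole of $Y$, so for \emph{every} nonzero $y\in Y$ the component of $R^N(\phi_0(y))$ supported on the $N$th ray vertex has norm $(\prod_{n<N}\lambda_n)\norm{y}\to\infty$. Your cycles are supported on blocks disjoint from the ray, so their contributions add to the norm rather than cancel anything; saying ``the $k$-th cycle dominates'' for $y$ near $y_k$ would only make $\norm{R^n(y)}$ larger still, whereas weak convergence of a subsequence $R^{n_i}(x)\to x$ requires that subsequence to be bounded. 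No linear mechanism of this shape can shut the ray off for $y\notin E$: since $\sph{Y}\setminus E$ can be dense in $\sph{Y}$, any linear map annihilating it annihilates $Y$. The same linearity objection applies to the clause ``the cycle's contribution becomes negligible for directions bounded away from $y_k$''; a linear cycle operator contributes proportionally to $\norm{y}$ (or to $|\langle y,y_k\rangle|$ if it is rank one), and neither quantity is small on $E$ in general.

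The paper's construction has \emph{no} escape ray: it uses only an infinite family of finite cycles with periods $T_k$, each $T_k$ dividing $T_{k+1}$. The dichotomy is produced by the feed maps, which are the missing idea in your proposal. To each cycle $k$ one associates a point $w_k\in\sph{Y}\setminus E$ (the $w_k$ form nets of increasing fineness in the complement of $E$), and the feed into cycle $k$ is the linear map $u\mapsto P_{w_k^{\perp}}(u)/\rho(w_k,E)$, where $\rho$ is essentially the angular distance. For $u\in E$ one has $\rho(u,w_k)\ge\rho(w_k,E)$, so the feed has norm $\ge 1$ for \emph{every} $k$, and a careful choice of the cycle amplitudes makes the accumulated bump heights tend to infinity. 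For $u\notin E$ one finds $w_{k_n}$ close to $u$, so the feed into cycle $k_n$ is small; at time $T_{k_n-1}$ all earlier cycles have completed full periods and contribute zero, while later cycles have barely begun. This, together with a Tzafriri-type lemma providing uniformly $\lp{p}$-like block subspaces in any symmetric-basis space (needed to control the bump norms uniformly), is what makes the argument go through.
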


Of course, we obtain the claim in the abstract by ensuring that $E
\subseteq \sph{Y}$ in Theorem \ref{master} is non-empty and nowhere
dense. Roughly speaking, we use the extra dimensions in the
complement of $Y$ in $X$ to encode the non-linear information in
$E$. To give an idea of what we mean by this, we can compare, at a
distance, this encoding of non-linear information to the standard
method of producing an operator on Hilbert space with a prescribed
spectrum, namely, by arranging a suitable, countable family of
eigenvalues. The proof of Theorem \ref{master} is spread across
sections \ref{sectionlocalestimates} and \ref{sectionmainproof}. We
expect that it is possible to generalise Theorem \ref{master} to
incorporate subsets of $\sph{Y}$ of greater topological complexity,
but to do so would go beyond the immediate aims of this paper and
would unduly complicate our existing proof.

Of course, if $X$ in Theorem \ref{master} is complemented in some
overspace $Z$ then we obtain a corresponding result about $Z$. In
particular, by considering $\czero$ or $\lp{p}$, $1 \leq p <
\infty$, we can see that any `classical' Banach space admits an
operator $T$ such that $A_T$ is non-empty and nowhere dense.

We finish this section by showing that the richness of structure of
$A_T$, demonstrated by Theorem \ref{master}, cannot be reproduced in
the finite-dimensional setting. In fact, we show that the operator
$R$ in Theorem \ref{master} cannot be compact in general.

\begin{prop}\label{compact}
Let $X$ be a Banach space and $\mapping{T}{X}{X}$ a compact
operator. Then there exist finite-codimensional subspaces $Z
\subseteq Y \subseteq X$ with the property that
\begin{enumerate}
\item if $x \in X\setminus Y$ then $\norm{T^n(x)} \rightarrow
\infty$;
\item if $x \in Y\setminus Z$ then there exists $M > 0$ such that
$M^{-1} \leq \norm{T^n(x)} \leq M$ for every $n$;
\item if $x \in Z$ then $\norm{T^n(x)} \rightarrow 0$.
\end{enumerate}
\end{prop}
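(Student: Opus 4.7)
My approach is via the Riesz spectral theory of compact operators. Since $T$ is compact, its non-zero spectrum consists of eigenvalues that accumulate only at $0$; in particular the sets $\sigma_{>} = \setcomp{\lambda\in\sigma(T)}{|\lambda|>1}$ and $\sigma_{=} = \setcomp{\lambda\in\sigma(T)}{|\lambda|=1}$ are finite, and each non-zero eigenvalue $\lambda$ has a finite-dimensional generalised eigenspace $E_\lambda = \ker((T-\lambda I)^{d_\lambda})$. Applying the Riesz projections corresponding to $\sigma_>$ and $\sigma_=$ yields a $T$-invariant decomposition $X = F_{>}\oplus F_{=}\oplus X_{<}$, where $F_{>}=\bigoplus_{\lambda\in\sigma_>}E_{\lambda}$ and $F_{=}=\bigoplus_{\lambda\in\sigma_=}E_{\lambda}$ are finite-dimensional and $\sigma(T|_{X_<})\subseteq\{0\}\cup\sigma_<$.

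Within $F_{=}$, isolate the genuine eigenvector part $F_{=}^{(0)}=\bigoplus_{\lambda\in\sigma_=}\ker(T-\lambda I)$, and set
\[
Y \;=\; F_{=}^{(0)}\oplus X_{<}\quad\text{and}\quad Z \;=\; X_{<}.
\]
Both subspaces are finite-codimensional, since $F_{>}$ and $F_{=}$ are finite-dimensional. The three required properties will then be verified component-by-component with respect to the decomposition $x = x_{>}+x_{=}+x_{<}$.

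The behaviour on each summand is routine. On $F_>$, the finite-dimensional operator $T|_{F_>}$ has every eigenvalue of modulus greater than $1$, hence is invertible with $\norm{(T|_{F_>})^{-n}}\to 0$; this forces $\norm{T^n(x_{>})}\to\infty$ exponentially whenever $x_{>}\ne 0$. On $F_{=}^{(0)}$ the restriction of $T$ is a semisimple invertible operator with spectrum on the unit circle, so both $\sup_n\norm{(T|_{F_{=}^{(0)}})^{n}}$ and $\sup_n\norm{(T|_{F_{=}^{(0)}})^{-n}}$ are finite, and orbits in $F_{=}^{(0)}$ are bounded above and below by positive constants. For $x_=\in F_{=}\setminus F_{=}^{(0)}$ the Jordan form of $T|_{F_=}$ has a non-trivial nilpotent part and $\norm{T^n(x_=)}$ grows polynomially. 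Finally, the non-accumulation of non-zero eigenvalues away from $0$ forces $\sigma_<$ to be bounded away from the unit circle, so the spectral radius of $T|_{X_<}$ is strictly less than $1$ and $\norm{T^n|_{X_<}}\to 0$.

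Clauses (1) and (3) then follow immediately. For clause (2) take $x=x_{=}+x_{<}\in Y\setminus Z$, so $x_{=}\ne 0$. The upper bound on $\norm{T^n(x)}$ is immediate, and $\norm{T^n(x)}\geq\norm{T^n(x_=)}-\norm{T^n(x_<)}\geq c\norm{x_=}/2$ for all sufficiently large $n$. For the finitely many remaining values of $n$, $T^n(x_=)\in F_=$ and $T^n(x_<)\in X_<$ sit in complementary summands with $T^n(x_=)\ne 0$ by invertibility of $T|_{F_=}$, so $T^n(x)\ne 0$ throughout and $\inf_n\norm{T^n(x)}>0$. The only place any care is needed is this lower bound for small $n$, handled by the complementarity of the summands; in the real case one also groups complex-conjugate pairs of eigenvalues when forming the Riesz projections, so that the spectral subspaces remain defined over $\real$, but this does not affect the argument.
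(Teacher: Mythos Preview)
Your proof is correct and follows essentially the same route as the paper's: Riesz spectral decomposition of the compact operator according to $|\lambda|>1$, $|\lambda|=1$, $|\lambda|<1$, followed by Jordan-form analysis on the finite-dimensional pieces, with $Y$ taken to be the genuine eigenvectors for $|\lambda|=1$ together with the ``small'' invariant complement, and $Z$ the small complement alone. The only differences are stylistic --- the paper computes explicitly with Jordan basis vectors and their biorthogonal functionals, whereas you argue more abstractly via the semisimple--nilpotent splitting and spectral-radius bounds --- and in the real case the paper passes to the complexification while you keep the Riesz projections real by grouping conjugate pairs; neither difference is substantive.
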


In particular, if $T$ is compact then $A_T$ is simply the complement
of a finite-codimensional subspace and, in particular, either empty
or dense. Proposition \ref{compact} is a mild elaboration and
generalisation of the behaviour of orbits in finite-dimensional
space, stated at the beginning of this note. The result is probably
folklore but we sketch a proof of it for completeness.

\begin{proof}[Sketch proof of Proposition \ref{compact}.]
First, we assume that $X$ is complex. Using the standard spectral
theory of compact operators and the theory of Jordan normal forms,
we know that we can find a (possibly empty) sequence of
finite-dimensional subspaces $\oneton{X}{n}$, and a
finite-codimensional subspace $Z$, all invariant for $T$, such that
$$
X \;=\; X_1 \oplus \ldots \oplus X_n \oplus Z.
$$
Moreover, the subspaces $X_i$ satisfy the following properties.
\begin{enumerate}
\item[(a)]\label{superdiagonal} each $X_i$ has some basis $e_{i,1},\ldots,e_{i,m_i}$ and associated biorthogonal
functionals $f_{i,1},\ldots,f_{i,m_i}$ on $X_i$, such that
\begin{enumerate}
\item[(i)] $f_{i,k}(T(e_{i,k})) = \lambda_i$ for $1 \leq k \leq m_i$;
\item[(ii)] $f_{i,k-1}(T(e_{i,k})) = 1$ for $2 \leq k \leq m_i$;
\item[(iii)] $f_{i,l}(T(e_{i,k})) = 0$ otherwise.
\end{enumerate}
\item[(b)]\label{allevals} if $x$ is an eigenvector of $T$ with eigenvalue $\lambda$, and $|\lambda| \geq
1$, then $x \in X_i$ and $\lambda = \lambda_i$ for some $i \leq n$.
\end{enumerate}

Define $Y = [e_{i,1}]_{1\leq i \leq n, |\lambda_i| = 1} \oplus Z$,
where $[\,\cdot\,]$ denotes norm-closed linear span. To prove the
proposition, it is sufficient to show that
\begin{enumerate}
\item[(I)] $\norm{T^n(x)} \rightarrow \infty$ whenever $x \in X_i\setminus
Y$;
\item[(II)] $\norm{T^n(x)} = \norm{x}$ whenever $x \in X_i \cap Y$
\item[(III)] $\norm{T^n(x)} \rightarrow 0$ whenever $x \in Z$.
\end{enumerate}
First, take $x \in X_i$ such that $f_{i,j}(x) = 0$ for $j > k$.
Using properties (ai)--(aiii), we see that
$$
f_{i,k}(T(x)) \;=\; \sum_{l=1}^{m_i} f_{i,k}(T(e_{i,l}))f_{i,l}(x)
\;=\; \lambda_i f_{i,k}(x)
$$
and $f_{i,j}(T(x)) = 0$ for $j > k$. Thus, by induction,
$f_{i,k}(T^n(x)) = \lambda^n_i f_{i,k}(x)$. Moreover, if $k > 1$
then we calculate
$$
f_{i,k-1}(T(x)) \;=\; \sum_{l=1}^{m_i}
f_{i,k-1}(T(e_{i,l}))f_{i,l}(x) \;=\; \lambda_i f_{i,k-1}(x) +
f_{i,k}(x)
$$
and by a second induction we obtain
$$
f_{i,k-1}(T^n(x)) \;=\; \lambda_i^n f_{i,k-1}(x) +
n\lambda_i^{n-1}f_{i,k}(x).
$$

Now consider (I). If $x \notin Y$ then in particular $f_{i,k}(x)
\neq 0$ for maximal $k \leq m_i$. There are two cases. If
$|\lambda_i| > 1$ then
$$
\norm{f_{i,k}}\norm{T^n(x)} \;\geq\; f_{i,k}(T^n(x)) \;=\;
|\lambda_i|^n |f_{i,k}(x)| \;\rightarrow\; \infty.
$$
If instead $|\lambda_i| = 1$ then we must have $k \geq 2$, thus
$$
\norm{f_{i,k-1}}\norm{T^n(x)} \;\geq\; |f_{i,k-1}(T^n(x))| \;\geq\;
n|f_{i,k}(x)| - |f_{i,k-1}(x)| \;\rightarrow\; \infty.
$$
To see (II), note that the results above give $T^n(e_{i,1}) =
\lambda_i^n e_{i,1}$ straightaway. For (III), observe that by
property (b) above, we have ensured that the restriction $S$ of $T$
to $Z$ has spectral radius $\alpha < 1$. If $x \in Z$ and $\alpha <
\beta < 1$ then
$$
\norm{T^n(x)} \;=\; \norm{S^n(x)} \;\leq\; \norm{S^n}\norm{x}
\;\leq\; \beta^n\norm{x} \;\rightarrow\; 0.
$$
for large enough $n$.

This completes the proof in the complex case. If $X$ is real then we
pass to its complexification $X_\mathbb{C}$ and consider the compact
operator $T_\mathbb{C}$, defined by $T_\mathbb{C}(x + \mi y) = Tx +
\mi Ty$.
\end{proof}

\section{Local estimates}\label{sectionlocalestimates}

Our map $R$ in Theorem \ref{master} is going to be a block diagonal
operator on $X$. In this section, we build the template for the
operators acting on the blocks and gather together some basic
estimates. Let $m,T \in \nat$, $\varepsilon > 0$ and $Y = \lp{p}^T$,
where $4m \leq T$ and $1 \leq p \leq \infty$. Define the operators
$\mapping{S}{Y}{Y}$ and $\mapping{F}{\real}{Y}$ by
$$
S(y) = (y_T,y_1,\ldots,y_{T-1})
$$
where $y = (\oneton{y}{T})$, and
$$
F(a) = (\underbrace{\varepsilon a,\ldots,\varepsilon a}_{m
\;\mathrm{times}},\underbrace{-\varepsilon a,\ldots,-\varepsilon
a}_{m \;\mathrm{times}},0,\ldots,0).
$$
In this way, $S$ can be described as a shift operator and $F$ a
`feed' operator. Let $\mapping{R}{\real\oplus Y}{\real\oplus Y}$ be
defined by $R(a,y) = (a,S(y)+F(a))$. We are interested in the
behaviour of $R^t(a,0)$ at time $t \in \nat$. We can imagine that
$S$ drives an airport baggage carousel and $F$ deposits the
passengers' bags onto the moving belt at a fixed set of positions
(although some of the bags have `negative mass'). The absolute mass
of bags deposited depends on the value of the first coordinate.
Aided by this analogy, we can see that the result of repeated
applications of $R$ to the vector $(a,0)$ can be viewed as the sum
of two bumps: one stationary bump of height $\varepsilon am$ and
base width $2m$, and a moving bump of height $-\varepsilon am$ and
base width again $2m$. The moving bump's motion is periodic, with
period $T$. Let us denote by $P$ the map $(a,y) \mapsto y$.

\begin{lem}
\label{localestimates} Let $1 \leq p \leq \infty$. Firstly, if $m
\leq t \leq T-m$ then
\begin{equation}
\label{lowerest} \norm{PR^t(a,0)} \geq \left\{
\begin{array} {l@{\quad}l}
\left(\frac{2}{p+1}\right)^{p^{-1}}\varepsilon m^{(p+1)p^{-1}} |a| & \mbox{if } p < \infty\\
\varepsilon m|a| & \mbox{if }p=\infty.\\
\end{array} \right.
\end{equation}
Secondly, there exists a constant $L$, depending only on $p$, such
that at all times $t$ we have
\begin{equation}
\label{unifupperest}
\norm{PR^t(a,0)} \leq \left\{
\begin{array} {l@{\quad}l}
L\varepsilon m^{(p+1)p^{-1}} |a| & \mbox{if } p < \infty\\
L\varepsilon m |a| & \mbox{if }p=\infty\\
\end{array} \right.
\end{equation}
and if $t \leq m$ then
\begin{equation}
\label{smalltimeupperest} \norm{PR^t(a,0)} \leq \left\{
\begin{array} {l@{\quad}l}
L\varepsilon m^{p^{-1}}t |a| & \mbox{if } p < \infty\\
L\varepsilon t |a| & \mbox{if }p=\infty.\\
\end{array} \right.
\end{equation}
\end{lem}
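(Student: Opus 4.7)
The plan is to derive an explicit description of $PR^t(a,0)$ as a cyclic superposition of two trapezoidal ``bumps'' and then to estimate its $\ell_p$ norm coordinate-by-coordinate. A straightforward induction on~$t$ yields
\[
PR^t(a,0) \;=\; \varepsilon a\sum_{k=0}^{t-1} S^k\bigl(\ind{[1,m]} - \ind{[m+1,2m]}\bigr) \;=\; \varepsilon a(u^+ - u^-),
\]
where $u^+_i$ (respectively $u^-_i$) counts those $k \in \{0,\ldots,t-1\}$ for which position $i$ lies in the shifted cyclic interval $[1+k,m+k]$ (respectively $[m+1+k,2m+k]$) modulo $T$. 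A standard counting argument identifies $u^+$ as a cyclic trapezoid: it rises linearly from $1$ at position~$1$ to $m$ at position~$m$, plateaus at value $m$ across $[m+1,t]$, and falls linearly from $m-1$ back to~$1$ across $[t+1,t+m-1]$. The function $u^-$ has the same shape, shifted $m$ positions cyclically forward. Since $t+m-1<T$ throughout the range $m \le t \le T-m$, the support of $u^+$ does not wrap, whereas $u^-$ wraps into the initial segment $[1,s]$ precisely when $s := t+2m-1-T > 0$.

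For the lower bound~\eqref{lowerest} I would split on whether $u^-$ wraps. If $m \le t \le T-2m+1$, the intervals $[1,m]$ and $[t+m,\,t+2m-1]$ are disjoint and carry the clean values $\varepsilon a\cdot(1,2,\ldots,m)$ and $-\varepsilon a\cdot(m,m-1,\ldots,1)$ respectively, so
\[
\norm{PR^t(a,0)}^p \;\ge\; 2(\varepsilon|a|)^p \sum_{j=1}^{m} j^p \;\ge\; \frac{2(\varepsilon|a|)^p m^{p+1}}{p+1},
\]
which rearranges to~\eqref{lowerest}. In the complementary subrange $T-2m+2 \le t \le T-m$ (in which the hypothesis $T \ge 4m$ forces $t \ge 2m$, so that $[m+1,2m]$ sits within the plateau of $u^+$), I would instead sum the $p$-th-power contributions over the block $\{m\} \cup [m+1,2m] \cup [T-m+1,T]$; a direct expansion writes this total as $2(\varepsilon|a|)^p\sum_{j=1}^m j^p$ plus the quantity $(\varepsilon|a|)^p\bigl(\sum_{k=s+1}^{m-1}k^p - \sum_{k=1}^{m-s-1}k^p\bigr)$, which is visibly non-negative by the term-by-term comparison $(s+j)^p \ge j^p$. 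The same lower bound therefore persists, and the $p=\infty$ case is immediate since position $i = m$ always carries value $\varepsilon a m$.

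For the upper bounds, the decisive observation is that $u^+$ and $u^-$ both take values in $\{0,1,\ldots,m\}$ and coincide with their common value $m$ on the overlap of their plateau regions; consequently the \emph{net} support of $u^+-u^-$ is contained in the union of the four ``edges'' of the two trapezoids and has cardinality at most $4m$ for every $t$. Since every coordinate of $PR^t(a,0)$ has absolute value at most $\varepsilon|a|m$, the uniform bound~\eqref{unifupperest} follows from the elementary inequality $\norm{v} \le \card(\supp v)^{1/p}\pnorm{v}{\infty}$. For~\eqref{smalltimeupperest}, when $t \le m$ neither trapezoid has reached its plateau, so $u^+_i,u^-_i \le t$ pointwise; the same support estimate together with the improved pointwise bound $\varepsilon|a|t$ yields the claim. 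The main obstacle is isolating enough ``uncontaminated'' coordinates to prove~\eqref{lowerest} in the wrap subrange, where the stationary and moving bumps partially cancel; this is resolved by the three-block choice together with the term-by-term comparison just described, which together recover the exact constant $(2/(p+1))^{1/p}$ uniformly over $m \le t \le T-m$.
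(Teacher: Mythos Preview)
Your argument is correct and rests on the same geometric picture as the paper's: both identify $PR^t(a,0)$ with $\varepsilon a$ times the difference of a stationary triangular bump $B$ on $[1,2m-1]$ and its cyclic translate $S^tB$ (your $u^+-u^-$ is exactly $B-S^tB$). The differences are purely in how the estimates are extracted. For the lower bound~\eqref{lowerest} the paper writes the single line $\norm{PR^t(a,0)}^p \ge 2\varepsilon^p|a|^p\int_0^m s^p\,ds$, implicitly using that for $m\le t\le T-m$ each bump has an uncontaminated half of length $m$; you make this explicit and, in the wrap range, recover the exact constant via the three-block choice and the term-by-term comparison $(s+j)^p\ge j^p$, which is a tidy way to handle the overlap that the paper glosses over. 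For the upper bounds~\eqref{unifupperest} and~\eqref{smalltimeupperest} the paper replaces the discrete sums by integrals and tracks a specific $L=(2^{p+3}/(p+1))^{1/p}$, whereas you use the cruder but cleaner inequality $\norm{v}_p\le\card(\supp v)^{1/p}\norm{v}_\infty$ together with the support bound $\card(\supp(B-S^tB))\le 4m$ and the pointwise bounds $|B_i-(S^tB)_i|\le m$ (or $\le t$ when $t\le m$); this gives $L=4^{1/p}$ with almost no computation. Your route is thus more careful on the lower bound and more elementary on the upper bounds, but the underlying decomposition is identical.
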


\begin{proof}
We estimate the norm of the sum of the standing and moving bumps. If
$p=\infty$ we simply measure the absolute height of the sum of the
bumps to obtain the values listed above, with $L=1$. From now on, we
shall assume that $p < \infty$. Set
$$
L \;=\; \left(\frac{2^{p+3}}{p+1}\right)^{\frac{1}{p}} \;>\;
\left(2+\frac{2^{p+2} + 1}{p+1}\right)^{\frac{1}{p}}.
$$
For (\ref{lowerest}), we have
$$
\norm{PR^t(a,0)}^p \;\geq\; 2\varepsilon^p|a|^p\lint{0}{m}{s^p}{s}
\;=\; \frac{2\varepsilon^p|a|^p}{p+1}m^{p+1}.
$$
To establish (\ref{unifupperest}), we note that the maximum value of
the norm is attained when the supports of the standing and moving
bumps are disjoint, which occurs if and only if $2m \leq t \leq
T-2m$. Thus we estimate
$$
\norm{PR^t(a,0)}^p \;\leq\; 4\varepsilon^p|a|^p\lint{0}{m+1}{s^p}{s}
\;=\; \frac{4\varepsilon^p|a|^p}{p+1}(m+1)^{p+1} \;\leq\;
\frac{2^{p+3}\varepsilon^p|a|^p}{p+1}m^{p+1}.
$$
For (\ref{smalltimeupperest}), when $t \leq m$, we note
\begin{eqnarray*}
\norm{PR^t(a,0)}^p &\leq& 2\varepsilon^p|a|^p\left\{(m-t)t^p +
\lint{0}{t+1}{s^p}{s} + \lint{0}{\frac{t}{2}}{(2s)^p}{s} \right\}\\
&=& 2\varepsilon^p|a|^p\left\{(m-t)t^p + \frac{(t+1)^{p+1}}{p+1} +
\frac{t^{p+1}}{2(p+1)}\right\}\\
&\leq& \left(2 + \frac{2^{p+2} + 1}{p+1} \right)\varepsilon^p mt^p
|a|^p.
\end{eqnarray*}
\end{proof}

In order to build our operator $R$ on a Banach space $X$ with a
symmetric basis, we will need to estimate the norms of certain
vectors in $X$ with reasonable precision. In order to do this, we
combine the estimates of Lemma \ref{localestimates} with a result
closely based on a theorem of Tzafriri \cite{tza:74}. We have
altered the statement of the next result to suit our purposes. In
what follows, $d(\cdot,\cdot)$ indicates Banach-Mazur distance and,
as above, $[\,\cdot\,]$ denotes norm-closed linear span.

\begin{prop}[{\cite[Proposition 5]{tza:74}}]
\label{tzafl2} Let $V$ be a $2^n$-dimensional vector space with
basis $(v_\sigma)_{\sigma \in G}$, where $G$ is the set of all
functions from $\{1,\ldots,n\}$ to $\{-1,1\}$. Suppose that there
are constants $K > 0$ and $r > 2$ such that given scalars
$a_\sigma$, $\sigma \in G$, we have
$$
\frac{K^{-1}}{(2^n)^{\frac{1}{s}}}\left(\sum_{\sigma \in G}
|a_\sigma|^s \right)^{\frac{1}{s}} \leq \norm{\sum_{\sigma \in G}
a_\sigma v_\sigma}\bigg/\norm{\sum_{\sigma \in G} v_\sigma} \leq
\frac{K}{(2^n)^{\frac{1}{r}}}\left(\sum_{\sigma \in G} |a_\sigma|^r
\right)^{\frac{1}{r}}
$$
where $r^{-1} + s^{-1} = 1$. Then there exists $M$, dependent on $K$
and $r$, but independent of $V$ and $n$, with the property that if
we define
$$
z_l = \sum_{\sigma \in G} \sigma(l)v_\sigma
$$
for $1 \leq l \leq n$, then $d([z_l]_{l=1}^n,\lp{2}^n) < M$.
\end{prop}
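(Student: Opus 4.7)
The plan is to show that the sequence $(z_l)_{l=1}^n$ behaves like the unit vector basis of $\lp{2}^n$: I will produce constants $A,B > 0$, depending only on $K$ and $r$, such that
$$A\bigg(\sum_{l=1}^n |b_l|^2\bigg)^{1/2}\norm{\sum_{\sigma \in G} v_\sigma} \;\leq\; \norm{\sum_{l=1}^n b_l z_l} \;\leq\; B\bigg(\sum_{l=1}^n |b_l|^2\bigg)^{1/2}\norm{\sum_{\sigma \in G} v_\sigma}$$
for all scalars $b_1,\ldots,b_n$. This immediately yields $d([z_l]_{l=1}^n,\lp{2}^n) \leq B/A$, which is the desired uniform bound $M$.

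Expanding in the basis gives $\sum_{l=1}^n b_l z_l = \sum_{\sigma \in G} c_\sigma v_\sigma$, where $c_\sigma = \sum_{l=1}^n b_l \sigma(l)$. The crucial observation is that, viewing $G$ as a probability space under the uniform measure, the coefficient $c_\sigma$ is precisely the value of the Rademacher sum in $(b_l)$ at the sign sequence $\sigma$. Consequently, Khintchine's inequality controls every $L^q$ average $\bigl(2^{-n}\sum_\sigma |c_\sigma|^q\bigr)^{1/q}$ from above and below by universal constants times $\bigl(\sum_l |b_l|^2\bigr)^{1/2}$, for any $1 \leq q < \infty$.

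For the upper estimate, the hypothesis bounds $\norm{\sum_l b_l z_l}$ by $\frac{K}{(2^n)^{1/r}}\bigl(\sum_\sigma |c_\sigma|^r\bigr)^{1/r}\norm{\sum_\sigma v_\sigma}$; applying the upper half of Khintchine at the exponent $r > 2$ cancels the factor $(2^n)^{1/r}$ and yields $B = K B_r$. Symmetrically, the lower hypothesis gives $\norm{\sum_l b_l z_l} \geq \frac{K^{-1}}{(2^n)^{1/s}}\bigl(\sum_\sigma |c_\sigma|^s\bigr)^{1/s}\norm{\sum_\sigma v_\sigma}$; the lower half of Khintchine at the conjugate exponent $s = r/(r-1) < 2$ cancels the factor $(2^n)^{1/s}$ and produces $A = K^{-1} A_s$. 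Combining the two estimates, $M \leq K^2 B_r A_s^{-1}$, which depends only on $K$ and $r$, independent of $V$ and $n$.

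There is no real obstacle beyond bookkeeping: the entire argument hinges on the cancellation of the factors $(2^n)^{\pm 1/q}$ between the hypothesis and the Khintchine average, which is precisely what forces the resulting Banach-Mazur distance to be dimension-free. That the problem distils to this single cancellation between a hypothesised $L^r$/$L^s$ comparison on $(v_\sigma)$ and the built-in $L^2$ behaviour of Rademacher sums is, in fact, the substance of Tzafriri's original theorem, and the modest restatement we need can be read off from the same argument.
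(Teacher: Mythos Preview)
The paper does not supply its own proof of this proposition: it is quoted, with a mild reformulation of the statement, from Tzafriri \cite[Proposition~5]{tza:74}. Your argument via Khintchine's inequality is correct and is exactly the mechanism behind Tzafriri's result; writing $\sum_l b_l z_l = \sum_\sigma c_\sigma v_\sigma$ with $c_\sigma = \sum_l b_l\sigma(l)$, the hypothesis converts $\norm{\sum_l b_l z_l}/\norm{\sum_\sigma v_\sigma}$ into the $L^r$ and $L^s$ averages of the Rademacher sum $c_\sigma$ over $G$, and Khintchine then pins both to $(\sum_l|b_l|^2)^{1/2}$ with constants depending only on $r$ (hence on $K$ and $r$). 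So your proposal is correct and coincides with the intended proof.
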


The proof of the next result closely follows that of \cite[Theorem
1]{tza:74}, although we note that the assumed symmetry of the norm
allows us to bypass the Ramsey arguments which feature in
\cite{tza:74}. Tzafriri's notation has also been modified slightly
to suit our requirements.

\begin{lem}
\label{symmtzafriri} Let $X$ have a normalised symmetric basis
$(e_i)_{i=1}^\infty$ with conjugate system $(e^*_i)_{i=1}^\infty$
and symmetric norm $\normdot$. Then there exists $M > 0$ and $p \in
\{1,2,\infty\}$, a pairwise disjoint family of finite subsets $F_n
\subseteq \nat$, $n \in \nat$, vectors $z_{l,n}$, $1 \leq l \leq n$,
supported on $F_n$ and permutations $\pi_n$ of $F_n$ with three
properties:
\begin{enumerate}
\item \label{simshift} given $n$, if a linear operator $S$ on $X$ satisfies
$S(e_i) = e_{\pi_n(i)}$ for all $i \in F_n$, then $S(z_{l,n}) =
z_{\tau(l),n}$, where $\tau$ is the cycle $(1,\ldots,n)$;
\item \label{uniflp} $d([z_{l,n}]_{l=1}^n,\lp{p}^n) < M$ for all $n$;
\item \label{order} $\pi_n$ has order $n$.
\end{enumerate}
\end{lem}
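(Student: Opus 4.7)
The plan is to closely adapt Tzafriri's proof of \cite[Theorem 1]{tza:74}, which produces uniformly isomorphic copies of $\lp{p}^n$ for some $p \in \{1,2,\infty\}$ inside any space with a symmetric basis. There are three cases: $p \in \{1,\infty\}$, handled by disjoint block-basis vectors, and $p = 2$, handled by the Rademacher-style construction already stated as Proposition \ref{tzafl2}. In each case, the trick is to index the distinguished vectors so that a natural cyclic permutation $\pi_n$ of the supports implements the cycle $\tau = (1,\ldots,n)$ on the vectors $z_{l,n}$. Since $(e_i)$ is symmetric, the Ramsey-type arguments of \cite{tza:74}, used there to locate ``sufficiently homogeneous'' subsequences, become unnecessary: any finite permutation of $\nat$ preserves the norm, so one may simply average over such permutations.

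In the cases $p = 1$ and $p = \infty$, for each $n$ I would pick $n$ pairwise disjoint finite subsets $A_{1,n},\ldots,A_{n,n} \subseteq \nat$ of common cardinality $k_n$, taken far enough along the basis to ensure that the $F_n = \bigcup_l A_{l,n}$ are themselves pairwise disjoint. Set $z_{l,n} = \sum_{i \in A_{l,n}} e_i$ and let $\pi_n$ be the shift sending the $j$-th element of $A_{l,n}$ to the $j$-th element of $A_{\tau(l),n}$. Then (\ref{simshift}) and (\ref{order}) are immediate, while (\ref{uniflp}) follows from the standard block-basis estimates in a symmetric space with the appropriate $p$.

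For the case $p = 2$, I would apply Proposition \ref{tzafl2}. Index the distinguished basis vectors by $G = \{-1,1\}^n$, let $F_n$ be the associated $2^n$ elements of $\nat$ (chosen disjoint from all earlier $F_k$), and set $z_{l,n} = \sum_{\sigma \in G} \sigma(l) e_\sigma$. Define $\pi_n$ to be the permutation of $F_n$ induced by $\sigma \mapsto \sigma \circ \tau^{-1}$ on $G$; this clearly has order exactly $n$, since $\pi_n^k$ is the identity on $G$ if and only if $\tau^k$ is the identity on $\{1,\ldots,n\}$. For any operator $S$ acting on $F_n$ according to $\pi_n$, the substitution $\rho = \sigma \circ \tau^{-1}$ yields
\[
S(z_{l,n}) \;=\; \sum_{\sigma \in G} \sigma(l) e_{\sigma \circ \tau^{-1}} \;=\; \sum_{\rho \in G} \rho(\tau(l)) e_\rho \;=\; z_{\tau(l),n},
\]
which verifies (\ref{simshift}); property (\ref{uniflp}) is exactly the conclusion of Proposition \ref{tzafl2}. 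The hardest part of the proof will be verifying that the $\lp{r},\lp{s}$-hypotheses of Proposition \ref{tzafl2} can be secured in the symmetric-basis setting; this is the heart of Tzafriri's argument, relying on averaging and on interpolation between the growth exponents of the fundamental function $\lambda_k = \norm{\sum_{i=1}^k e_i}$ and those of its dual sequence, and is precisely the step that determines which of the three values of $p$ is available.
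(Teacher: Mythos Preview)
Your proposal is correct and follows essentially the same route as the paper: the same three-case split inherited from Tzafriri, the same block-basis construction with a cyclic shift for $p\in\{1,\infty\}$, and the same Rademacher/Proposition~\ref{tzafl2} construction with $\pi_n$ induced by $\sigma\mapsto\sigma\circ\tau^{-1}$ for $p=2$, including the observation that symmetry removes the need for Ramsey arguments. The only cosmetic difference is in Case $p=1$: the paper picks $z_{1,n}$ as an arbitrary near-norming vector for the dual block and then translates, whereas you take $z_{l,n}=\sum_{i\in A_{l,n}}e_i$ directly; since $\lambda(m)\mu(m)=m$ for a $1$-symmetric basis, your choice already satisfies $z^*_{1,n}(z_{1,n}/\lambda(m_n))=1$, so the two constructions coincide.
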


\begin{proof}
Define
$$
\lambda(n) = \norm{e_1 + \ldots + e_n} \quad\mbox{and}\quad \mu(n) =
\norm{e^*_1 + \ldots + e^*_n}.
$$
We follow the proof of \cite[Theorem 1]{tza:74} in distinguishing
three cases.

Case I: for every $n \in \nat$ there exists $m_n \in \nat$ such that
$\lambda(nm_n)/\lambda(m_n) < 2$. Put $p = \infty$. Set $k_1 = 0$
and, given $k_n$, define $k_{n+1} = k_n + nm_n$. Let
$$
F_n = \{k_n + 1,\ldots,k_n + nm_n\}
$$
and define
$$
z_{l,n} = (e_{k_n + (l-1)m_n + 1} + \ldots e_{k_n +
lm_n})/\lambda(m_n)
$$
for $1 \leq l \leq n$, $n \in \nat$. Finally, define
$$
\pi_n(k_n + (l-1)m_n + r) = \left\{
\begin{array} {l@{\quad}l}
k_n + lm_n + r & \mbox{if } 1 \leq l < n\mbox{ and }1 \leq r \leq m_n\\
k_n + r & \mbox{if }l = n \mbox{ and }1 \leq r \leq m_n.\\
\end{array} \right.
$$
It is clear that the $F_n$ are pairwise disjoint and properties
(\ref{simshift}) and (\ref{order}) hold. Now we prove
(\ref{uniflp}). By the symmetry of the norm, we have $\norm{z_{l,n}}
= 1$. Since
\begin{eqnarray*}
{\textstyle\max_{l=1}^n |a_l|} \;\leq\; \norm{\sum_{l=1}^n
a_lz_{l,n}}
&\leq& {\textstyle\max_{l=1}^n |a_l|}\norm{\sum_{l=1}^n z_{l,n}}\\
&\leq& {\textstyle\max_{l=1}^n
|a_l|}\frac{\lambda(nm_n)}{\lambda(m_n)} \;\leq\; {\textstyle
2\max_{l=1}^n |a_l|}
\end{eqnarray*}
for any scalars $\oneton{a}{n}$, we can see that (\ref{uniflp})
holds for any $M > 2$.

Case II: for every $n \in \nat$ there exists $m_n \in \nat$ such
that $\mu(nm_n)/\mu(m_n) < 2$. Now put $p = 1$ and set $k_n$, $F_n$
and $\pi_n$ exactly as in case I. If we set
$$
z^*_{l,n} = (e^*_{k_n + (l-1)m_n + 1} + \ldots e^*_{k_n +
lm_n})/\mu(m_n).
$$
then we have
$$
{\textstyle\max_{l=1}^n |a_l|} \;\leq\; \norm{\sum_{l=1}^n
a_lz^*_{l,n}} \;\leq\; {\textstyle 2\max_{l=1}^n |a_l|}
$$
just as above. Let $z_{1,n}$ satisfy $\norm{z_{1,n}} = 1$ and
$z^*_{1,n}(z_{1,n}) \geq \frac{1}{2}$, and have support contained in
$\{k_n+1,k_n + m_n\}$, i.e., the support of $z^*_{1,n}$. If we let
$S$ be a linear operator satisfying $S(e_i) = e_{\pi_n(i)}$ for $i
\in F_n$, and define $z_{l,n} = S^{l-1}(z_{1,n})$ for $1 < l \leq
n$, then it follows by the symmetry of the norm that $\norm{z_{l,n}}
= 1$ and $z^*_{l,n}(z_{l,n}) = z^*_{1,n}(z_{1,n})$ whenever $1 \leq
l \leq n$. By design, we have ensured that (\ref{simshift}) holds.
To check (\ref{uniflp}), we observe that
$$
\norm{\sum_{l=1}^n a_lz_{l,n}} \leq \sum_{l=1}^n |a_l| \leq
2\left(\sum_{l=1}^n \lambda_l z^*_{l,n}\right)\left(\sum_{k=1}^n a_k
z_{k,n}\right) \leq 4\norm{\sum_{l=1}^n a_l z_{l,n}}
$$
where $\lambda_l a_l = |a_l|$ for $1 \leq l \leq n$. Therefore
(\ref{uniflp}) holds whenever $M > 4$.

Case III: if neither case I nor case II hold then, following the
proof of \cite[Theorem 1]{tza:74} in case III, we obtain constants
$K > 0$ and $r > 2$ such that for all $n \in \nat$ and scalars
$\oneton{a}{n}$, we have
$$
\frac{K^{-1}}{n^{\frac{1}{s}}}\left(\sum_{l=1}^n |a_l|^s
\right)^{\frac{1}{s}} \leq \frac{1}{\lambda(n)}\norm{\sum_{l=1}^n
a_l e_{n+l}} \leq \frac{K}{n^{\frac{1}{r}}}\left(\sum_{l=1}^n
|a_l|^r \right)^{\frac{1}{r}}
$$
where $r^{-1} + s^{-1} = 1$. We set $p=2$ and
$$
F_n \;=\; \{2^n + 1,\ldots,2^{n+1} \}.
$$
Fix $n$ and let $f$ be a bijection from $F = F_n$ to $G$, where $G$
is as in Proposition \ref{tzafl2}. Put $v_\sigma =
e_{f^{-1}(\sigma)}$ for $\sigma \in G$, and let $z_l$, $1 \leq l
\leq n$, be as in Proposition \ref{tzafl2}. Let $\tau$ be the cycle
$(1,\ldots,n)$, define a permutation $\hat{\pi}$ on $G$ by
$\hat{\pi}(\sigma) = \sigma \circ \tau^{-1}$, and then set $\pi =
f^{-1} \circ \hat{\pi} \circ f$. We have (\ref{order}). If $S$ is an
operator on $X$ satisfying $S(e_i) = e_{\pi(i)}$ then we calculate
\begin{eqnarray*}
S(z_l) \;=\; S\left(\sum_{\sigma \in G} \sigma(l)v_\sigma \right)
&=&
S\left(\sum_{\sigma \in G} \sigma(l)e_{f^{-1}(\sigma)} \right)\\
&=& \sum_{\sigma \in G} \sigma(l)e_{f^{-1}(\hat{\pi}(\sigma))}\\
&=& \sum_{\sigma \in G} \sigma(l)v_{\hat{\pi}(\sigma)} \;=\;
\sum_{\sigma \in G} (\sigma \circ \tau)(l)v_\sigma \;=\;
z_{\tau(l)}.
\end{eqnarray*}
Moreover, by construction, we have ensured that
$d([z_l]_{l=1}^n,\lp{p}^n) < M$.
\end{proof}

We remark that we can follow the proof of \cite[Theorem 1]{tza:74} a
little more to show that the subspaces $[z_{l,n}]_{l=1}^n$, $n \in
\nat$, are uniformly complemented in $X$, that is, they are the
images of a sequence of projections which are uniformly bounded in
norm. However, we do not require this particular property of the
$[z_{l,n}]_{l=1}^n$.

\section{Proof of Theorem \ref{master}}\label{sectionmainproof}

Let $X$, $Y$, $E$ and $J$ be as in Theorem \ref{master}. As $X$ has
a symmetric basis, it is isomorphic to its closed,
finite-codimensional subspaces. Moreover, $X$ is isomorphic to the
space $X^{d-1}_2$, which denotes the product $X^{d-1}$, endowed with
the norm given by $\norm{(\oneton{x}{d-1})}^2 = \sum_{j=1}^{d-1}
\norm{x_j}^2$. Therefore, by considering a suitable isomorphism,
Theorem \ref{master} follows from the following proposition.

\begin{prop}\label{realmaster}
Whenever $E$ is a closed, symmetric subset of $\sph{\ell^d_2}$, with
$2 \leq d < \infty$, then there exists an operator
$\mapping{R}{\ell^d_2 \oplus X^{d-1}_2}{\ell^d_2 \oplus X^{d-1}_2}$
with two properties:
\begin{enumerate}
\item if $u \in E$ then $\norm{R^n(u,x)} \rightarrow \infty$;
\item if $u \in \sph{\ell^d_2}\setminus E$ then there is a subsequence $(R^{n_i}(u,x))$
of $(R^n(u,x))$ such that $R^{n_i}(u,x) \rightarrow (u,x)$ weakly.
\end{enumerate}
\end{prop}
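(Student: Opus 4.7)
The plan is to construct $R$ in the split form $R(u,x) = (u, Sx + Fu)$, so that $u \in \ell^d_2$ is an invariant of $R$ and the dynamics reduce to understanding the affine evolution $x \mapsto Sx + Fu$ on $X^{d-1}_2$ for each fixed $u$. First I apply Lemma \ref{symmtzafriri} to the symmetric basis of $X$ to obtain disjoint finite blocks $F_n \subseteq \nat$, permutations $\pi_n$ of $F_n$ of order $n$, and vectors $(z_{l,n})_{l=1}^n$ whose spans are uniformly $\lp{p}^n$-isomorphic for some $p \in \{1,2,\infty\}$. The shift $S$ is defined on each of the $d-1$ copies of $X$ by $Se_i = e_{\pi_n(i)}$ for $i \in F_n$, and by property (\ref{simshift}) of Lemma \ref{symmtzafriri}, $S$ cyclically permutes the $z_{l,n}$ within each block, mimicking the cyclic shift of Lemma \ref{localestimates} with $T=n$.

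Now fix a countable symmetric set $D \subseteq E$ dense in the compact set $E$, and enumerate its elements as $(v_n)_{n \in \nat}$ in such a way that every element of $D$ recurs infinitely often and at every block-size scale. For each $n$, the block-$n$ component of the feed $F : \ell^d_2 \to X^{d-1}_2$ is built by transporting the Lemma \ref{localestimates} zero-mean bump onto $[z_{l,n}]_{l=1}^n$ through the Lemma \ref{symmtzafriri} isomorphism, scaled by an amplitude that depends linearly on $u$ and is controlled by the proximity of $u$ to $v_n$; the $d-1$ copies of $X$ supply the degrees of freedom needed to encode this proximity vectorially rather than by a single inner product. By Lemma \ref{localestimates}, the block-$n$ trace of $R^t(u,x)$ is then a periodic bump of period $n$ whose norm peaks at order $\varepsilon_n m_n^{(p+1)/p}$ times this amplitude inside the active window $m_n \le t \bmod n \le n - m_n$ and vanishes at every multiple of $n$.

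For property (1), let $u \in E$. Density of $\{v_n\}$ in $E$ produces infinitely many $n$ at which $v_n$ is arbitrarily close to $u$; with the enumeration arranged so that such indices occur at every block-size scale, at each large time $t$ there are many active blocks where the feed amplitude approaches its maximum. Choosing $\varepsilon_n, m_n$ so that $\varepsilon_n m_n^{(p+1)/p}$ grows appropriately and aggregating these resonant contributions in the symmetric norm via the $\lp{p}^n$-geometry of Lemma \ref{symmtzafriri}, one obtains $\norm{R^t(u,x)} \to \infty$. For property (2), let $u \in \sph{\ell^d_2} \setminus E$; compactness of $E$ gives $\sigma := \max_{v\in E}|\langle u,v\rangle| < 1$, so the block-$n$ feed amplitudes are uniformly bounded by a constant strictly less than $1$. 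Consider the subsequence $t_k = \mathrm{lcm}(1,\ldots,N_k)$: by property (\ref{order}) of Lemma \ref{symmtzafriri}, $\pi_n^{t_k}$ is the identity for all $n \le N_k$, so $S^{t_k}$ fixes the low blocks pointwise and $S^{t_k}x \to x$ weakly. The zero-mean structure of the feed combined with full-cycle divisibility on low blocks forces $\sum_{j=0}^{t_k-1}S^jFu$ to vanish on $\bigcup_{n \le N_k}F_n$, while on the high-block tail $\bigcup_{n > N_k}F_n$ the symmetric basis yields weak convergence to $0$ provided the tail norms remain bounded.

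The hard part is the joint parameter calibration: the same $\varepsilon_n, m_n$ control the resonant growth for $u \in E$ and the tail size for $u \notin E$. The weak inequality $|\langle u,v_n\rangle| \le \sigma < 1$ will \emph{not} by itself dominate the growth of $\varepsilon_n m_n^{(p+1)/p} \to \infty$; hence the feed cannot simply be $\langle u,v_n\rangle$ times a bump. The extra $d-1$ copies of $X$ and the $\lp{p}^n$-geometry of Lemma \ref{symmtzafriri} must be exploited to design a linear feed whose block-$n$ contribution vanishes fast as $u$ moves away from $v_n$, sharpening ``proximity to $v_n$'' beyond a bare inner product. Designing this sharpened feed and simultaneously verifying the two-sided estimate --- growth for $u \in E$ at every large $t$ and tail boundedness for $u \notin E$ along the lcm subsequence --- is what I expect to be the technical heart of the proof.
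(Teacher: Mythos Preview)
Your framework is right --- the split form $R(u,x)=(u,Sx+Fu)$, the block decomposition from Lemma~\ref{symmtzafriri}, and the periodic bump from Lemma~\ref{localestimates} are exactly what the paper uses --- but the heart of the construction is inverted, and you have correctly flagged but not resolved the resulting obstruction.

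You place the probe points $v_n$ \emph{inside} $E$ and make the block-$n$ feed large when $u$ is close to $v_n$. This cannot be calibrated: for $u\notin E$ you only get $|\langle u,v_n\rangle|\le\sigma<1$, which is a \emph{constant} upper bound, not a decaying one. At the block just above your lcm cutoff $N_k$ the feed amplitude is still of order $\sigma$, so by (\ref{smalltimeupperest}) and the choice $\varepsilon_n m_n^{(p+1)/p}\to\infty$ that block alone contributes something like $n\sigma\to\infty$ to $\|R^{t_k}(u,0)\|$. No linear sharpening of ``proximity to $v_n\in E$'' fixes this, because a linear functional that is large at $v_n$ is large on a whole half-space.

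The paper's key idea is to do the opposite: place the probes $w_k$ in the \emph{complement} of $E$ (specifically, in nets of the sets $W_n=\{v:\rho(v,E)\ge 2^{-n}\}$), and let the block-$k$ feed in the $j$th copy of $X$ be $\langle u,e_{w_k,j}\rangle/\rho(w_k,E)$, where $(e_{w_k,j})_{j=1}^{d-1}$ is an orthonormal basis of $w_k^\perp$. The $(d-1)$-vector of these feeds has norm $\rho(u,w_k)/\rho(w_k,E)$, which is $\ge 1$ for \emph{every} $u\in E$ and \emph{every} block (giving growth at all large times, not just along resonant blocks), while for $u\notin E$ one can find, for each $n$, a specific block $k_n$ with $w_{k_n}$ so close to $u$ that the feed amplitude is $\le 2^{n_0+1-n}\to 0$. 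The subsequence is then $T_{k_n-1}$: all earlier blocks have completed full cycles and vanish, block $k_n$ has vanishing feed, and later blocks have barely started. This is precisely the two-sided estimate you were looking for, and it hinges on probing the complement rather than $E$ itself.
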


We shall prove Proposition \ref{realmaster} with a sequence of
lemmas. The proofs in the real and complex cases are practically
identical. First, we consider a map $\rho$ defined on
$\sph{\ell^d_2}$ by
$$
\rho(u,v)^2 \;=\; 1 - |\ip{u}{v}|^2,
$$
where $\ip{\cdot}{\cdot}$ denotes the usual inner product. This
function $\rho$ is a pseudometric on $\sph{\ell^d_2}$ which, given
$u \in \sph{\ell^d_2}$, identifies the points $\lambda u$,
$|\lambda| = 1$. We shall also define
$$
\rho(u,E) \;=\; \inf\setcomp{\rho(u,v)}{v \in E}.
$$
Since $E$ is closed and symmetric, it follows that $\rho(u,E) = 0$
if and only if $u \in E$. Given $v \in \sph{\ell^d_2}\setminus E$,
we select an orthonormal basis $e_{v,1},\ldots,e_{v,d-1}$ of the
perpendicular subspace $v^\perp$ and define
$\mapping{\Delta_v}{\ell^d_2}{\ell^{d-1}_2}$ by
$$
\Delta_v(u) \;=\;
\frac{1}{\rho(v,E)}(\ip{u}{e_{v,1}},\ldots,\ip{u}{e_{v,d-1}}).
$$
Obviously, $\pnorm{\Delta_v(u)}{2} = \rho(u,v)/\rho(v,E)$ whenever
$u \in \sph{\ell^d_2}$, where $\pnormdot{2}$ denotes the usual norm
on $\ell^{d-1}_2$.

Let $W_n = \setcomp{v \in \sph{\ell^d_2}}{\rho(v,E) \geq 2^{-n}}$.
It is a straightforward matter to show that for each $n$, we can
find a $n^{-1}$-net of $\sph{\ell^d_2}$, with respect to $\rho$,
which has size of order $n^{2d-1}$ (or $n^{d-1}$ if we are
considering real Banach spaces). Therefore, there exists an integer
$K$ such that, for each $n$, there exist vectors
$$
v^n_1,\ldots v^n_{K2^{n(2d-1)}} \in W_n,
$$
with repetitions if necessary, with the property that for any $u \in
W_n$, we can find $v^n_i$ satisfying $\rho(u,v^n_i) \leq 2^{-n}$.

\begin{lem}\label{inequalities2}
Let $u \in \sph{\ell^d_2}$. Firstly
\begin{equation}\label{lambdaupperest}
\pnorm{\Delta_v(u)}{2} \leq 2^n \quad\mbox{whenever }v \in W_n.
\end{equation}
Secondly, if $u \in E$ then
\begin{equation}\label{largeapprox}
\pnorm{\Delta_v(u)}{2} \geq 1 \quad\mbox{whenever }v \in
\sph{\ell^d_2}\setminus E.
\end{equation}
Finally, if $u \not\in E$ then there exists $n_0$ with the property
that whenever $n > n_0$, there exists $i$ in the range $1 \leq i
\leq K2^{n(2d-1)}$, such that
\begin{equation}\label{lambdaapprox2}
\pnorm{\Delta_{v^n_i}(u)}{2} \;\leq\; 2^{n_0+1-n}.
\end{equation}
\end{lem}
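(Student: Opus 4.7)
The plan is to lean on the identity $\pnorm{\Delta_v(u)}{2} = \rho(u,v)/\rho(v,E)$ for $u \in \sph{\ell^d_2}$ and $v \in \sph{\ell^d_2}\setminus E$, which is recorded in the paragraph preceding the lemma. Given that identity, every one of (\ref{lambdaupperest})--(\ref{lambdaapprox2}) reduces to a short calculation about the pseudometric $\rho$.

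For (\ref{lambdaupperest}), I would observe that $\rho(u,v) \leq 1$ always, since $\rho(u,v)^2 = 1 - |\ip{u}{v}|^2 \leq 1$; combined with $\rho(v,E) \geq 2^{-n}$ from the definition of $W_n$, this gives $\pnorm{\Delta_v(u)}{2} \leq 2^n$ at once. For (\ref{largeapprox}), if $u \in E$ then $\rho(v,E) \leq \rho(v,u) = \rho(u,v)$ directly from the definition of $\rho(v,E)$ as an infimum over $E$, so the ratio is bounded below by $1$.

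The only step with any content is (\ref{lambdaapprox2}). Since $E$ is closed and $u \notin E$ we have $\rho(u,E) > 0$, so I would fix an integer $n_0$ with $\rho(u,E) \geq 2^{-n_0}$. Then for $n > n_0$ we have $u \in W_n$, so by the net property there exists an index $i$ with $\rho(u,v^n_i) \leq 2^{-n}$. The key is to bound $\rho(v^n_i,E)$ from below, which I propose to do via the reverse triangle inequality for the pseudometric $\rho$: for every $w \in E$,
$$
\rho(v^n_i,w) \;\geq\; \rho(u,w) - \rho(u,v^n_i),
$$
and taking the infimum over $w \in E$ yields
$$
\rho(v^n_i,E) \;\geq\; \rho(u,E) - \rho(u,v^n_i) \;\geq\; 2^{-n_0} - 2^{-n} \;\geq\; 2^{-n_0-1}
$$
whenever $n \geq n_0 + 1$. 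Dividing $\rho(u,v^n_i) \leq 2^{-n}$ by this lower bound delivers $\pnorm{\Delta_{v^n_i}(u)}{2} \leq 2^{n_0+1-n}$, as required. The only thing that needs a little care is the bookkeeping with $n_0$: choosing $n_0$ so that $2^{-n_0} \leq \rho(u,E)$ and insisting $n > n_0$ leaves exactly the factor of $2$ of slack needed to absorb the subtracted $2^{-n}$ in the triangle inequality, which is why the final exponent comes out as $n_0+1-n$ rather than $n_0-n$.
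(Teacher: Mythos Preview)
Your proof is correct and follows essentially the same route as the paper's: the paper only writes out (\ref{lambdaapprox2}), fixing $n_0$ with $u \in W_{n_0}$, using the net property to find $v^n_i$ with $\rho(u,v^n_i) \leq 2^{-n}$, and then applying the reverse triangle inequality exactly as you do to get $\rho(v^n_i,E) \geq 2^{-n_0} - 2^{-n} \geq 2^{-(n_0+1)}$. Your treatments of (\ref{lambdaupperest}) and (\ref{largeapprox}) are the evident ones the paper leaves implicit.
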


\begin{proof}
We only prove (\ref{lambdaapprox2}). Fix $n_0$ such that $u \in
W_{n_0}$. For $n
> n_0$, we can find $v = v^n_i$ such that $\rho(u,v) \leq 2^{-n}$.
Therefore
$$
\rho(v,E) \;\geq\; \rho(u,E) - \rho(u,v) \;\geq\; 2^{-n_0} - 2^{-n}
\;\geq\; 2^{-(n_0+1)}
$$
and $\pnorm{\Delta_v(u)}{2} \leq 2^{n_0+1-n}$.
\end{proof}

We take constants $m_k,T_k \in \nat$ and $\varepsilon_k
> 0$. The values of these constants will be chosen in due course.
Let $M$, $p$, $F_n$, $z_{l,n}$ and $\pi_n$ be as in Lemma
\ref{symmtzafriri}. Define
$$
S(e_i) = \left\{
\begin{array} {l@{\quad}l}
e_{\pi_{T_k}(i)} & \mbox{if } i \in F_{T_k} \mbox{ for some }T_k\\
e_i & \mbox{otherwise}\\
\end{array} \right.
$$
and extend S linearly to $X$. As $\normdot$ is symmetric, $S$ is an
isometry. Define operators
$\mapping{S_k}{[z_{l,T_k}]_{l=1}^{T_k}}{[z_{l,T_k}]_{l=1}^{T_k}}$
and $\mapping{F_k}{\real}{[z_{l,T_k}]_{l=1}^{T_k}}$ by
$$
S_k\left(\sum_{l=1}^{T_k} y_l z_{l,T_k}\right) = \sum_{l=1}^{T_k}
y_l z_{\tau(l),T_k}
$$
where $\tau$ is the cycle $(1,\ldots,T_k)$, and
$$
F_k(a) = a\varepsilon_k\sum_{l=1}^{m_k}z_{l,T_k} -
a\varepsilon_k\sum_{l=m_k+1}^{2m_k}z_{l,T_k}.
$$
Then define $R_k$ on $\real\oplus [z_{l,T_k}]_{l=1}^{T_k}$ by
$$
R_k(a,y) = (a,S_k(y) + F_k(a))
$$
and let $P_k(a,y) = y$ for $a \in \real$ and $y \in
[z_{l,T_k}]_{l=1}^{T_k}$. Let $Q$ and $Q_j$ be the standard
projections of $\ell^d_2 \oplus X^{d-1}_2$ onto $\ell^d_2$ and onto
the $j$th copy of $X$, respectively. We define integers $C_1 = 1$
and $C_{n+1} = C_n + K2^{n(2d-1)}$ for $n \geq 1$, and set $w_k =
v^n_{k+1-C_n}$ whenever $C_n \leq k < C_{n+1}$.

Finally, if $u \in \ell^d_2$ and $x \in X^{d-1}_2$, we can define an
operator $R$ on $\ell^d_2 \oplus X^{d-1}_2$ by
$$
QR(u,x) \;=\; u \quad\mbox{and}\quad Q_jR(u,x) = SQ_j(0,x) +
\sum_{k=1}^\infty \frac{F_k(\ip{u}{e_{w_k,j}})}{\rho(w_k,E)}
$$
for $1 \leq j \leq d-1$ and where $e_{w_k,j}$ is the $j$th element
of the given basis of $w_k^\perp$, chosen above.

Of course, it is necessary to choose the constants $m_k,T_k$ and
$\varepsilon_k$ so that $R$ is bounded and maps into $X$. First let
$m_1$ = 1 and $T_0 = 1$. Then set $T_k = (5^{dn} + 1)T_{k-1}$, $m_k
= T_{k-1} - m_{k-1}$ and
$$
\varepsilon_k = \left\{
\begin{array} {l@{\quad}l}
\frac{n}{m_k^{(p+1)p^{-1}}} & \mbox{if } p=1 \mbox{ or }p=2\\
\frac{n}{m_k} & \mbox{if } p = \infty
\end{array} \right.
$$
whenever $C_n \leq k < C_{n+1}$. Our first task is to show that,
with respect to these constants, $R$ is a bounded operator mapping
into $\ell^d_2 \oplus X^{d-1}_2$.

\begin{lem}
\label{rgoodop} The operator $R$ is bounded and maps into $\ell^d_2
\oplus X^{d-1}_2$.
\end{lem}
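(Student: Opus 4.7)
Since $R$ is manifestly linear in $(u,x)$, both claims---convergence into $\ell^d_2 \oplus X^{d-1}_2$ and boundedness---follow once we exhibit a constant $C$ with $\norm{R(u,x)} \leq C(\norm{u} + \norm{x})$. The first coordinate is trivial, and since $S$ permutes the basis vectors of $X$, the symmetry of $\normdot$ makes $S$ an isometry, so $\norm{SQ_j(0,x)} \leq \norm{x}$. The real content is to show that the series $\sum_{k=1}^\infty F_k(\ip{u}{e_{w_k,j}})/\rho(w_k,E)$ converges in $X$ and is bounded by a multiple of $\norm{u}$.

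The first step is a per-term estimate. The vector $F_k(a)$ lives in $[z_{l,T_k}]_{l=1}^{T_k}$, which by Lemma \ref{symmtzafriri} is at Banach--Mazur distance less than $M$ from $\lp{p}^{T_k}$. Under the natural identification of the $z_{l,T_k}$ with the unit vector basis of $\lp{p}^{T_k}$, $F_k(a)$ corresponds to the concrete ``feed'' vector $F(a)$ from Section \ref{sectionlocalestimates}, whose $\lp{p}^{T_k}$-norm is $\varepsilon_k |a|(2m_k)^{1/p}$ for $p<\infty$ and $\varepsilon_k |a|$ for $p=\infty$. Substituting the prescribed values of $\varepsilon_k$ collapses both cases to $\norm{F_k(a)} \leq cMn\,|a|/m_k$ whenever $C_n \leq k < C_{n+1}$, where $c$ depends only on $p$. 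Combining this with $\rho(w_k,E) \geq 2^{-n}$ (since $w_k \in W_n$) and $|\ip{u}{e_{w_k,j}}| \leq \norm{u}$ gives a per-term bound of $cMn\,2^n \norm{u}/m_k$.

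The second step is summation. Because the sets $F_{T_k}$ are pairwise disjoint, all the summands $F_k(\cdot)$ are disjointly supported in $X$, so the triangle inequality reduces convergence of the series to absolute summability of norms. Grouping the tail by the blocks $[C_n,C_{n+1})$, which each contain $K2^{n(2d-1)}$ indices, and using that $m_k$ is monotone in $k$, the block-$n$ contribution is at most $cMKn\,2^{2dn}\norm{u}/m_{C_n}$. The rapid recursion $T_k = (5^{dn}+1)T_{k-1}$ inside block $n$ makes $m_{C_n}$ (essentially $T_{C_n-1}$) grow doubly exponentially in $n$, easily dominating $n\,2^{2dn}$. Hence $\sum_n n\,2^{2dn}/m_{C_n} < \infty$, the series is absolutely convergent, and $\norm{Q_j R(u,x)} \leq \norm{x} + C'\norm{u}$ for some absolute constant $C'$.

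The main difficulty is calibrating the constants so that three competing growths cancel inside each block: the number $K2^{n(2d-1)}$ of ``test'' directions $w_k$, the amplification $1/\rho(w_k,E)$ up to $2^n$, and the decay required of $\norm{F_k(a)}$. The choice $\varepsilon_k = n/m_k^{(p+1)/p}$ is precisely what cancels the $m_k^{(p+1)/p}$ coming from the $\lp{p}$-norm of the feed vector, after which the super-exponential growth of $T_k$ absorbs the remaining $n\,2^{2dn}$ factor---everything else is bookkeeping.
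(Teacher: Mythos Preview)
Your proof is correct and follows essentially the same route as the paper's: reduce to absolute summability of the series $\sum_k \rho(w_k,E)^{-1}F_k(\ip{u}{e_{w_k,j}})$, obtain the per-term bound $\lesssim n2^n/m_k$ for $C_n\le k<C_{n+1}$ via the $\lp{p}$-equivalence from Lemma~\ref{symmtzafriri} and the definition of $\varepsilon_k$, and then sum. The only cosmetic differences are that the paper obtains the norm of $F_k(a)$ by invoking Lemma~\ref{localestimates} (estimate~(\ref{smalltimeupperest}) at $t=1$) rather than computing it directly as you do, and that the paper carries out the summation using the pointwise bound $m_k\ge 5^{d(k-1)}$ together with $k\ge C_n\ge n$, arriving at the explicit tail $\sum_n n(4/5)^{dn}$, whereas you bound each block by its first term $m_{C_n}$ and appeal to double-exponential growth.
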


\begin{proof}
It is enough to show that $\sum_{k=1}^\infty
\rho(w_k,E)^{-1}F_k(\ip{u}{e_{w_k,j}})$ is absolutely summable for
$1 \leq j \leq d-1$. Assume $u \in \sph{\ell^d_2}$. By Lemma
\ref{symmtzafriri}, part \ref{uniflp}, we have
\begin{equation}
\label{equiv} M^{-\frac{1}{2}}\pnorm{y}{p} \leq
\norm{\sum_{l=1}^{T_k} y_l z_{l,T_k}} \leq
M^{\frac{1}{2}}\pnorm{y}{p}
\end{equation}
where $y = (y_1,\ldots,y_{T_k}) \in \lp{p}^{T_k}$ and $\pnormdot{p}$
is the usual norm on $\lp{p}^{T_k}$. We shall assume that $M \geq
L$, where $L$ is as in Lemma \ref{localestimates}. Note that $F_k(a)
= S_k(0) + F_k(a) = P_kR_k(a,0)$. Therefore, from
(\ref{smalltimeupperest}) with $t = 1$, (\ref{lambdaupperest}),
(\ref{equiv}) and the definition of $\varepsilon_k$, we have
\begin{eqnarray*}
\frac{\norm{F_k(\ip{u}{e_{w_k,j}})}}{\rho(w_k,E)} &\leq& \left\{
\begin{array} {l@{\quad}l}
M^{\frac{3}{2}}\varepsilon_km_k^{p^{-1}}\rho(w_k,E)^{-1}|\ip{u}{e_{w_k,j}}| & \mbox{if } p=1,2\\
M^{\frac{3}{2}}\varepsilon_k\rho(w_k,E)^{-1}|\ip{u}{e_{w_k,j}}| & \mbox{if }p=\infty\\
\end{array} \right.\\
&\leq& M^{\frac{3}{2}}n \pnorm{\Delta_{w_k}(u)}{2} m_k^{-1}\\
&\leq& M^{\frac{3}{2}}n 2^n m_k^{-1}
\end{eqnarray*}
whenever $C_n \leq k < C_{n+1}$.

From the definitions of $m_k$ and $T_k$, we obtain
\begin{equation}
\label{mkTk} m_{k+1} = T_k - m_k = T_k - T_{k-1} + m_{k-1} \geq
5^{dn}T_{k-1} \geq 5^{dn}m_k
\end{equation}
whenever $C_n \leq k < C_{n+1}$. In particular, $m_{k+1} \geq 5^d
m_k$. Therefore
\begin{eqnarray*}
\sum_{k=1}^\infty \frac{\norm{F_k(\ip{u}{e_{w_k,j}})}}{\rho(w_k,E)}
&\leq& \sum_{n=1}^\infty\sum_{k = C_n}^{C_{n+1}-1}
M^{\frac{3}{2}}n2^n m_k^{-1}\\
&\leq& M^{\frac{3}{2}}m_1^{-1} \sum_{n=1}^\infty\sum_{k =
C_n}^{C_{n+1}-1}
\frac{n2^n}{5^{d(k-1)}}\\
&\leq& M^{\frac{3}{2}}5^d \sum_{n=1}^\infty\sum_{k =
C_n}^{C_{n+1}-1}
\frac{n2^n}{5^{dn}}\\
&=& {\textstyle M^{\frac{3}{2}}}5^dK \sum_{n=1}^\infty 2^{n(2d-1)}
\frac{n2^n}{5^{dn}}\;=\; {\textstyle M^{\frac{3}{2}}}5^dK
\sum_{n=1}^\infty n{\textstyle(\frac{4}{5})^{dn}}
\end{eqnarray*}
bearing in mind that $k-1 \geq C_n-1 \geq n-1$ whenever $C_n \leq k
< C_{n+1}$. Hence $R$ is bounded and $R(u,x) \in \ell^d_2 \oplus
X^{d-1}_2$.
\end{proof}

In order to analyse the behaviour of $R^m(u,x)$, it will help to
consider separately $R^m(u,0)$ and $R^m(0,x)$.

\begin{lem}
\label{splitproblem} Given $(u,x) \in \lp{2}^d \oplus X^{d-1}_2$, we
have
\begin{equation}
\label{shift} Q_jR^m(0,x) = S^mQ_j(0,x)
\end{equation}
and
\begin{equation}
\label{perturb} Q_jR^m(u,0) = \sum_{k=1}^\infty \frac{P_k
R^m_k(\ip{u}{e_{w_k,j}},0)}{\rho(w_k,E)}
\end{equation}
for $1 \leq j \leq d-1$ and for all $m$.
\end{lem}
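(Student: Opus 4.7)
The plan is to split $R^m(u,x)$ into $R^m(u,0) + R^m(0,x)$ using linearity, verify each identity by induction on $m$, and along the way isolate the single structural fact that makes the right-hand side of \eqref{perturb} behave correctly under $R$, namely that $S$ restricted to $[z_{l,T_k}]_{l=1}^{T_k}$ agrees with $S_k$.

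First I would observe that because the $F_{T_k}$ are pairwise disjoint, and because $S$ is defined to act as the permutation $\pi_{T_k}$ on each $F_{T_k}$ (and as the identity elsewhere), Lemma \ref{symmtzafriri}\eqref{simshift} gives $S(z_{l,T_k}) = z_{\tau(l),T_k}$. In other words, $S$ leaves every $[z_{l,T_k}]_{l=1}^{T_k}$ invariant and $S\restrict{[z_{l,T_k}]_{l=1}^{T_k}} = S_k$. I will use this repeatedly without further comment.

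For \eqref{shift}, note that $QR(0,x) = 0$, so inductively $QR^m(0,x) = 0$ and the feed term $F_k(\ip{0}{e_{w_k,j}}) = 0$ vanishes. Thus $Q_jR(0,x) = SQ_j(0,x)$, and a straightforward induction on $m$, together with the fact that $R$ is linear in the $x$-slot, produces $Q_jR^m(0,x) = S^mQ_j(0,x)$.

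For \eqref{perturb}, the base case $m=1$ is immediate: since $R_k(a,0) = (a, F_k(a))$ we have $P_kR_k(a,0) = F_k(a)$, which matches the definition of $Q_jR(u,0)$. For the inductive step, write $R^m(u,0) = (u, y^{(m)})$ with $Q_jy^{(m)}$ given by the inductive formula. Then
\begin{equation*}
Q_jR^{m+1}(u,0) \;=\; SQ_jy^{(m)} + \sum_{k=1}^\infty \frac{F_k(\ip{u}{e_{w_k,j}})}{\rho(w_k,E)}.
\end{equation*}
Since each summand $P_kR_k^m(\ip{u}{e_{w_k,j}},0)$ lies in $[z_{l,T_k}]_{l=1}^{T_k}$, the structural observation above lets me replace $S$ by $S_k$ term by term. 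Combining this with the identity $P_kR_k^{m+1}(a,0) = S_kP_kR_k^m(a,0) + F_k(a)$, which follows at once from the definition of $R_k$, the two sums collapse into $\sum_k \rho(w_k,E)^{-1}P_kR_k^{m+1}(\ip{u}{e_{w_k,j}},0)$, as required.

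The only nontrivial point is really the interchange of the single isometry $S$ on $X$ with the block operators $S_k$; once this is in hand, both identities fall out of routine inductions and the linearity of $R$ yields the general case $(u,x)$ by adding \eqref{shift} and \eqref{perturb}. The absolute convergence of the series in \eqref{perturb}, which is needed to justify moving $S$ inside the sum, is guaranteed by the estimate established in Lemma \ref{rgoodop}.
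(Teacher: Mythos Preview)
Your proof is correct and follows essentially the same approach as the paper's: both argue by induction on $m$, isolate the structural fact $S\restrict{[z_{l,T_k}]_{l=1}^{T_k}} = S_k$ via Lemma~\ref{symmtzafriri}\eqref{simshift}, and use the recursion $P_kR_k^{m+1}(a,0) = S_kP_kR_k^m(a,0) + F_k(a)$ to collapse the two sums. Your explicit mention of the convergence justification (via Lemma~\ref{rgoodop} and boundedness of $S$) is a point the paper leaves implicit.
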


\begin{proof}
We proceed by induction. Clearly $Q_jR(0,x) = SQ_j(0,x)$, so
$(\ref{shift})$ holds for $m=1$. If (\ref{shift}) holds for some $m
\geq 1$ and $R^m(0,x) = (0,y)$ then
$$
Q_jR^{m+1}(0,x) = Q_jR(0,y) = SQ_j(0,y) = SQ_jR^m(0,x) =
S^{m+1}Q_j(0,x).
$$
Now
$$
P_kR_k(a,0) = S_k(0) + F_k(a) = F_k(a)
$$
and $SQ_j(u,0) = 0$, so (\ref{perturb}) holds for $m=1$. Assume that
$(\ref{perturb})$ holds for some $m \geq 1$. Suppose that
$$
P_k R^m_k(a,0) = y = \sum_{l=1}^{T_k}y_l z_{l,T_k}.
$$
By Lemma \ref{symmtzafriri}, we have ensured that $S(y) = S_k(y)$.
Furthermore, we observe
\begin{eqnarray*}
P_k R^{m+1}_k(a,0) \;=\; P_k R_k(R^m_k(a,0)) &=& P_k R_k(a,y)\\
&=& S_k(y) + F_k(a)\\
&=& S(y) + F_k(a) \;=\; SP_kR^m_k(a,0) + F_k(a).
\end{eqnarray*}
Therefore, if $R^m(u,0) = (u,z)$ then
\begin{eqnarray*}
Q_jR^{m+1}(u,0) &=& Q_jR(u,z)\\
&=& SQ_j(0,z) + \sum_{k=1}^\infty \frac{F_k(\ip{u}{e_{w_k,j}})}{\rho(w_k,E)}\\
&=& SQ_jR^m(u,0) + \sum_{k=1}^\infty \frac{F_k(\ip{u}{e_{w_k,j}})}{\rho(w_k,E)}\\
&=& S\left(\sum_{k=1}^\infty \frac{P_k R^m_k(\ip{u}{e_{w_k,j}},0)}{\rho(w_k,E)}\right) + \sum_{k=1}^\infty \frac{F_k(\ip{u}{e_{w_k,j}})}{\rho(w_k,E)}\\
&=& \sum_{k=1}^\infty \frac{SP_k R^m_k(\ip{u}{e_{w_k,j}},0) + F_k(\ip{u}{e_{w_k,j}})}{\rho(w_k,E)}\\
&=& \sum_{k=1}^\infty \frac{P_k R^{m+1}_k(\ip{u}{e_{w_k,j}},0)}{\rho(w_k,E)}
\end{eqnarray*}
as required.
\end{proof}

The consequence of Lemma \ref{splitproblem} is that we can split the
analysis of $R^m (u,x)$ into two parts: the `shift' and the
`perturbation'. First, we examine the behaviour of the shift.

\begin{lem}
\label{shiftconvergence} Given $x \in X^{d-1}_2$, we have
$\norm{R^m(0,x)} = \norm{(0,x)}$ for all $m$. Moreover,
$R^{T_k}(0,x) \stackrel{w}{\rightarrow} (0,x)$.
\end{lem}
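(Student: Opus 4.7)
The plan is to handle the two claims separately. For the norm equality, I would start by noting that $QR(u,x) = u$ forces $QR^m(0,x) = 0$ for every $m$, so $R^m(0,x)$ lies in $\{0\} \oplus X^{d-1}_2$. Then (\ref{shift}) from Lemma \ref{splitproblem} reduces everything to the shift: $Q_j R^m(0,x) = S^m Q_j(0,x)$. The key point is that $S$ was defined by permuting basis vectors (via the various $\pi_{T_k}$ on the disjoint blocks $F_{T_k}$, and as the identity elsewhere), and the symmetry of $\normdot$ forces any such basis permutation to be an isometry of $X$. Hence $\norm{S^m Q_j(0,x)} = \norm{Q_j(0,x)}$, and summing in the $\ell_2$-product norm of $X^{d-1}_2$ yields
\[
\norm{R^m(0,x)}^2 \;=\; \sum_{j=1}^{d-1}\norm{S^mQ_j(0,x)}^2 \;=\; \sum_{j=1}^{d-1}\norm{Q_j(0,x)}^2 \;=\; \norm{(0,x)}^2.
\]

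For the weak convergence claim, I would aim for the stronger statement that $R^{T_k}(0,x) \to (0,x)$ in norm, which trivially implies the weak conclusion. Using (\ref{shift}) again, this reduces to showing $S^{T_k} y \to y$ in norm for every $y \in X$. The crucial observation is the divisibility relation built into the construction: from $T_k = (5^{dn}+1) T_{k-1}$ one sees inductively that $T_l$ divides $T_k$ whenever $l \leq k$. Since Lemma \ref{symmtzafriri}(\ref{order}) tells us that $\pi_{T_l}$ has order exactly $T_l$, the permutation $\pi_{T_l}^{T_k}$ is the identity on $F_{T_l}$ for every $l \leq k$. Consequently $S^{T_k}$ fixes every basis vector $e_i$ with $i \notin \bigcup_l F_{T_l}$, and fixes every $e_i \in F_{T_l}$ as soon as $k \geq l$.

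It follows that for each fixed $i$ the vector $S^{T_k} e_i$ equals $e_i$ for all sufficiently large $k$. By linearity this gives norm convergence $S^{T_k} y \to y$ on the dense subspace of finitely supported vectors, and a standard $3\varepsilon$-argument---justified by the uniform bound $\norm{S^{T_k}} = 1$ coming from the isometry property established in the first paragraph---promotes this to $S^{T_k} y \to y$ in norm for every $y \in X$. Combined with $QR^{T_k}(0,x) = 0$ and the $\ell_2$-product structure on $X^{d-1}_2$, this delivers $R^{T_k}(0,x) \to (0,x)$ in norm, hence weakly. There is no real obstacle: the lemma is essentially an audit of the construction, confirming that the divisibility pattern of $(T_k)_{k \geq 0}$ and the orders of the $\pi_{T_l}$ mesh correctly with the symmetry of $\normdot$.
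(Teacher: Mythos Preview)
Your proof is correct and follows essentially the same route as the paper: both use (\ref{shift}) together with the isometry of $S$ for the first claim, and the divisibility $T_l \mid T_k$ (so that $\pi_{T_l}^{T_k}$ is the identity) combined with the symmetric norm for the second. Your density-plus-$3\varepsilon$ packaging is a minor reorganisation of the paper's direct tail estimate, and your observation that one actually gets norm convergence is a small bonus---the paper's bound is uniform over unit functionals, so it too yields norm convergence despite being stated only weakly.
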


\begin{proof}
Given (\ref{shift}) and the fact that $S$ is an isometry, the first
assertion is trivial. Now consider the weak convergence. Let $f \in
\dual{X}$ with $\norm{f} = 1$, $\varepsilon > 0$ and $1 \leq j \leq
d-1$. If $Q_j(0,x) = \sum_{i=1}^\infty x_i e_i$, we take $k \in
\nat$ such that
$$
\norm{\sum_{l=k+1}^\infty\sum_{i \in F_{T_l}} x_i e_i} <
\varepsilon.
$$
Since $T_l$ divides $T_r$ whenever $l \leq r$, we can see that
$\pi_{T_l}^{T_r}$ is the identity for such $l$. Therefore, if $r
\geq k$, we estimate
\begin{eqnarray*}
|f(Q_j R^{T_k}(0,x) - Q_j(0,x))| &=& |f(S^{T_r}Q_j(0,x) - Q_j(0,x))|\\
&=& \left| f\left(\sum_{l=r+1}^\infty\sum_{i \in
F_{T_l}}x_ie_{\pi^{T_r}_{T_l}(i)} -
\sum_{l=r+1}^\infty\sum_{i \in F_{T_l}} x_i e_i\right)\right|\\
&\leq& 2\norm{\sum_{l=r+1}^\infty\sum_{i \in F_{T_l}} x_i e_i}\\
&\leq& 2\norm{\sum_{l=k+1}^\infty\sum_{i \in F_{T_l}} x_i e_i} \;<\;
2\varepsilon
\end{eqnarray*}
by symmetry of the norm. Thus $Q_jR^{T_k}(0,x)
\stackrel{w}{\rightarrow} Q_j(0,x)$ whenever $1 \leq j \leq d-1$.
The weak convergence of $R^{T_k}(0,x)$ to $(0,x)$ follows.
\end{proof}

Now we analyse the behaviour of the perturbation. Ultimately, it is
the perturbation that drives the behaviour of the system as a whole.

\begin{lem}
\label{perturbconvergence} If $u \in E$ then
$$
\norm{R^m(u,0)} \rightarrow \infty.
$$
On the other hand, if $u \in \sph{\ell^d_2}\setminus E$ then there
exists $k_n$ in the range $C_n \leq k_n < C_{n+1}$ with the property
that
$$
\norm{R^{T_{k_n - 1}}(u,0) - (u,0)} \rightarrow 0.
$$
\end{lem}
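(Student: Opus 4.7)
The plan is to handle the two assertions separately, in each case combining the decomposition (\ref{perturb}) of Lemma \ref{splitproblem} with the pointwise estimates of Lemma \ref{localestimates} (pushed into $X$ via the equivalence (\ref{equiv})) and the quantitative information about $u$ provided by Lemma \ref{inequalities2}. The key shared observation is that the blocks $[z_{l,T_k}]_{l=1}^{T_k}$ are supported on the pairwise disjoint sets $F_{T_k}$, so by $1$-unconditionality of the symmetric basis one has the single-term lower bound $\norm{Q_jR^m(u,0)}\geq\norm{P_{k_0}R^m_{k_0}(\ip{u}{e_{w_{k_0},j}},0)}/\rho(w_{k_0},E)$ for any chosen $k_0$, while the triangle inequality yields an upper bound for the whole series.

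For assertion (1), given $m$ I would let $k=k(m)$ be the unique integer with $T_{k-1}\leq m<T_k$ and let $n$ be given by $C_n\leq k<C_{n+1}$. Since $t_k:=m\bmod T_k=m$ satisfies $T_{k-1}\leq t_k<T_k$ and $m_k\leq T_{k-1}$, Lemma \ref{localestimates}(\ref{lowerest}) applies to $R_k$ at time $m$ provided $m\leq T_k-m_k=5^{dn}T_{k-1}+m_{k-1}$; if this fails, I would replace $k$ by $k+1$ and verify, using $T_{k+1}=(5^{dn''}+1)T_k$, that $m_{k+1}\leq m\leq T_{k+1}-m_{k+1}$. Either way one obtains $k'\in\{k,k+1\}$ for which $R_{k'}$ is in the bump phase at time $m$. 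Since $u\in E$ and $w_{k'}\in\sph{\ell^d_2}\setminus E$, Lemma \ref{inequalities2}(\ref{largeapprox}) yields $\pnorm{\Delta_{w_{k'}}(u)}{2}\geq 1$, whence some $j=j(m)$ satisfies $|\ip{u}{e_{w_{k'},j}}|/\rho(w_{k'},E)\geq(d-1)^{-1/2}$. Feeding this into the single-term lower bound together with (\ref{lowerest}), (\ref{equiv}) and the choice $\varepsilon_{k'}m_{k'}^{(p+1)p^{-1}}=n'$ (or $\varepsilon_{k'}m_{k'}=n'$ when $p=\infty$), where $n'$ is the block index of $k'$, produces $\norm{R^m(u,0)}\gtrsim n'/\sqrt{d-1}$; since $n'\to\infty$ with $m$, this gives (1).

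For assertion (2), fix $n_0$ with $u\in W_{n_0}$, and for each $n>n_0$ apply Lemma \ref{inequalities2}(\ref{lambdaapprox2}) to choose $i_n$ with $\pnorm{\Delta_{v^n_{i_n}}(u)}{2}\leq 2^{n_0+1-n}$; put $k_n=C_n+i_n-1$, so that $k_n\in[C_n,C_{n+1})$ and $w_{k_n}=v^n_{i_n}$. Since $QR^m(u,0)=u$ the problem reduces to showing $\norm{Q_jR^{T_{k_n-1}}(u,0)}\to 0$ for each $j$, and I would split the series (\ref{perturb}) into three pieces. For $k<k_n$, iterating $T_k=(5^{dn}+1)T_{k-1}$ gives $T_k\mid T_{k_n-1}$, so $R_k^{T_{k_n-1}}(a,0)=(a,0)$ and the term vanishes. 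For $k=k_n$, the uniform upper bound (\ref{unifupperest}), (\ref{equiv}), the smallness $|\ip{u}{e_{w_{k_n},j}}|/\rho(w_{k_n},E)\leq 2^{n_0+1-n}$, and $\varepsilon_{k_n}m_{k_n}^{(p+1)p^{-1}}=n$ produce a contribution of order $n\cdot 2^{n_0+1-n}$. For $k>k_n$, the inequality $T_{k_n-1}\leq m_k$ (from (\ref{mkTk}) applied at $k_n$) puts the problem into the small-time regime, so (\ref{smalltimeupperest}), (\ref{equiv}) and (\ref{lambdaupperest}) give a term of order $n'2^{n'}T_{k_n-1}/m_k$ with $n'$ the block index of $k$; iterating (\ref{mkTk}) yields $T_{k_n-1}/m_{k_n+j}\lesssim 5^{-djn}$ within block $n$ and much faster decay across later blocks, so the tail is dominated by a geometric sum of order $n2^n\cdot 5^{-dn}$, which tends to $0$.

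The principal obstacle is the scale bookkeeping in (2): one must simultaneously verify that $T_{k_n-1}$ lies in the correct time window of Lemma \ref{localestimates} for each $k\geq k_n$, that the sometimes-large prefactor $2^{n'}$ permitted by (\ref{lambdaupperest}) is beaten by the geometric decay of $T_{k_n-1}/m_k$ from (\ref{mkTk}), and that the leading $k=k_n$ term is tamed by the exponentially small $\pnorm{\Delta_{w_{k_n}}(u)}{2}$. These are precisely the tensions that force the rapid growth $T_k=(5^{dn}+1)T_{k-1}$: it has to outpace both the block count $K2^{n(2d-1)}$ and the scale $2^{n'}$ arising from the finite-dimensional geometry of $\sph{\ell^d_2}$.
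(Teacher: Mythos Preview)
Your proposal is correct and follows essentially the same route as the paper: the decomposition (\ref{perturb}), the single-block lower bound via disjoint supports for part~(1), and the three-way split $r<k_n$, $r=k_n$, $r>k_n$ for part~(2), with exactly the same inputs (\ref{lowerest})--(\ref{smalltimeupperest}), (\ref{lambdaupperest})--(\ref{lambdaapprox2}), (\ref{equiv}), (\ref{mkTk}). The only noteworthy difference is your index choice in part~(1): you pick $k$ with $T_{k-1}\leq m<T_k$ and then possibly shift to $k+1$, whereas the paper observes directly that $m_{k+1}=T_k-m_k$, so the intervals $[m_k,m_{k+1})$ tile $\nat$ and one may simply take the unique $k$ with $m_k\leq m<m_{k+1}=T_k-m_k$; this sidesteps your case split. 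In part~(1) the paper also sums over all $j$ to recover $\pnorm{\Delta_{w_k}(u)}{2}\geq 1$ rather than selecting a single coordinate, but this is cosmetic.
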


\begin{proof}
Let $u \in E$ and suppose that $p=1$ or $p=2$. Assume that $m_k \leq
m < T_k - m_k = m_{k+1}$ and $C_n \leq k < C_{n+1}$. Then by
(\ref{lowerest}), (\ref{largeapprox}), (\ref{equiv}),
(\ref{perturb}) and the definition of $\varepsilon_k$, we have
\begin{eqnarray*}
\norm{R^m(u,0)}^2 &\geq& \sum_{j=1}^{d-1}\norm{Q_jR^m(u,0)}^2\\
&\geq& \sum_{j=1}^{d-1}\left(\frac{\norm{P_k
R^m_k(\ip{u}{e_{w_k,j}},0)}}{\rho(w_k,E)}\right)^2\\
&\geq&
\sum_{j=1}^{d-1}M^{-1}\left(\frac{2}{p+1}\right)^{2p^{-1}}\varepsilon^2_km_k^{2(p+1)p^{-1}}\left(\frac{|\ip{u}{e_{w_k,j}}|}{\rho(w_k,E)}\right)^2\\
&=&
M^{-1}\left(\frac{2}{p+1}\right)^{2p^{-1}}n^2\sum_{j=1}^{d-1}\left(\frac{|\ip{u}{e_{w_k,j}}|}{\rho(w_k,E)}\right)^2\\
&\geq& M^{-1}\left(\frac{2}{p+1}\right)^{2p^{-1}}n^2
\end{eqnarray*}
If $u \in E$ and $p = \infty$ then similarly, we obtain
$$
\norm{R^m(u,0)}^2 \;\geq\; M^{-1}n^2.
$$
Either way, $\norm{R^m(u,0)} \rightarrow \infty$.

Instead, if $u \in \sph{\ell^d_2}\setminus E$ then by
(\ref{lambdaapprox2}), for every $n > n_0$ there exists $k_n$ in the
range $C_n \leq k_n < C_{n+1}$, such that
$$
\pnorm{\Delta_{w_{k_n}}(u)}{2} \;\leq\; 2^{n_0+1-n}
$$
By (\ref{unifupperest}), (\ref{equiv}) and the definition of
$\varepsilon_k$, we have
\begin{eqnarray}
\nonumber \norm{P_{k_n}
R^{T_{k_n-1}}_{k_n}(\ip{u}{e_{w_{k_n},j}},0)} &\leq& \left\{
\begin{array} {l@{\quad}l}
M^{\frac{3}{2}}\varepsilon_{k_n}m_{k_n}^{(p+1)p^{-1}}|\ip{u}{e_{w_{k_n},j}}| & \mbox{if } p = 1,2\\
M^{\frac{3}{2}}\varepsilon_{k_n}m_{k_n}|\ip{u}{e_{w_{k_n},j}}| & \mbox{if }p = \infty\\
\end{array} \right.\\
\label{Tk}&\leq& M^{\frac{3}{2}}\rho(w_{k_n},E)n2^{n_0+1-n}.
\end{eqnarray}
Then we notice that if $r \leq k_n - 1$, we have
\begin{eqnarray}
\label{lessTk}\norm{P_rR^{T_{k_n-1}}_r(\ip{u}{e_{w_r,j}},0)} &=& 0
\end{eqnarray}
because $R^{T_r}_r$ is the identity and $T_r$ divides $T_{k_n-1}$
whenever $r \leq k_n-1$. Now we have to estimate
$\norm{P_rR^{T_{k_n-1}}_r(\ip{u}{e_{w_r,j}},0)}$ for $r \geq
k_n+1$. If $r \geq k_n+1$ then from (\ref{mkTk}), we have
$$
m_r \geq 5^{d(r-(k_n+1))}m_{k_n+1} \geq
5^{d(r-(k_n+1))}5^{dn}T_{k_n-1}.
$$
Take $l \geq n$ such that $C_l \leq r < C_{l+1}$. Again we assume
that $M \geq L$, with $L$ as in Lemma \ref{localestimates}. We apply
(\ref{smalltimeupperest}), (\ref{lambdaupperest}) and (\ref{equiv})
to obtain
\begin{eqnarray}
\label{moreTk} &&\norm{P_rR^{T_{k_n-1}}_r(\ip{u}{e_{w_r,j}},0)}\\
\nonumber &\leq& \left\{\begin{array} {l@{\quad}l}
M^{\frac{3}{2}}\varepsilon_r m_r^{p^{-1}}T_{k_n-1}|\ip{u}{e_{w_r,j}}| & \mbox{if } p=1,2\\
M^{\frac{3}{2}}\varepsilon_r T_{k_n-1}|\ip{u}{e_{w_r,j}}| & \mbox{if }p=\infty\\
\end{array} \right.\\
\nonumber &\leq& M^{\frac{3}{2}} T_{k_n-1}\frac{\rho(w_r,E)l2^l}{m_r}\\
\nonumber &\leq& M^{\frac{3}{2}}\rho(w_r,E)\frac{l2^l}{5^{d(r-(k_n+1))}5^{dn}}\\
\nonumber &\leq&
M^{\frac{3}{2}}\rho(w_r,E)\frac{l2^l}{5^{dn}5^{d(l-n)}}
\quad\mbox{since }l-n \leq r-(k_n+1)\\
\nonumber &=& M^{\frac{3}{2}}\rho(w_r,E)\frac{l2^l}{5^{dl}}
\end{eqnarray}
Combining (\ref{perturb}) with (\ref{Tk}), (\ref{lessTk}) and
(\ref{moreTk}) gives
\begin{eqnarray*}
\norm{Q_jR^{T_{k_n-1}}(u,0)} &\leq& \sum_{r=1}^\infty
\frac{\norm{P_rR^{T_{k_n-1}}_r(\ip{u}{e_{w_r,j}},0)}}{\rho(w_r,E)}\\
&=& \sum_{r=k_n}^\infty \frac{\norm{P_rR^{T_{k_n-1}}_r(\ip{u}{e_{w_r,j}},0)}}{\rho(w_r,E)}\\
&=& M^{\frac{3}{2}}n2^{n_0+1-n} +
\sum_{r=k_n+1}^\infty
\frac{\norm{P_rR^{T_{k_n-1}}_r(\ip{u}{e_{w_r,j}},0)}}{\rho(w_r,E)}\\
&\leq& M^{\frac{3}{2}}n2^{n_0+1-n} +
M^{\frac{3}{2}}\sum_{l=n}^\infty K2^{l(2d-1)} \frac{l2^l}{5^{dl}}\\
&=& M^{\frac{3}{2}}n2^{n_0+1-n} +
M^{\frac{3}{2}}K\sum_{l=n}^\infty l({\textstyle \frac{4}{5}})^{dl}\\
&\rightarrow& 0
\end{eqnarray*}
as $n \rightarrow \infty$. This concludes the proof.
\end{proof}

\begin{proof}[Proof of Proposition \ref{realmaster}]
Let $u \in E$. By Lemmas \ref{shiftconvergence} and
\ref{perturbconvergence} we have
$$
\norm{R^m (u,x)} \geq \norm{R^m (u,0)} - \norm{R^m(0,x)} = \norm{R^m
(u,0)} - \norm{(0,x)} \rightarrow \infty
$$
as $m \rightarrow \infty$.

Now suppose $u \in \sph{\ell^d_2}\setminus E$. Again by Lemmas
\ref{shiftconvergence} and \ref{perturbconvergence}, we can pick
suitable $k_n$ such that
$$
R^{T_{k_n-1}}(u,x) = R^{T_{k_n-1}}(u,0) + R^{T_{k_n-1}}(0,x)
\stackrel{w}{\rightarrow} (u,0) + (0,x) = (u,x).
$$
\end{proof}

If $X = \czero$ or $X = \lp{p}$, $1 \leq p < \infty$, then we can
simplify the proof of Theorem \ref{master} by replacing the
$z_{l,n}$ with unit vectors and replacing the corresponding $\pi_n$
with cycles. Since there is a Banach space with a symmetric basis
but containing no isomorphic copy of $\czero$ or $\lp{p}$, $1 \leq p
< \infty$, \cite{fj:74}, it is not possible to obtain Theorem
\ref{master} by proving it in the cases $X = \czero$ and $X =
\lp{p}$, and then appealing to complemented subspaces.

\section{Problems}

Since the operators constructed in this note rely fundamentally on
permutations of basis vectors, it makes sense to pose the following
question.

\begin{prob}
If $X$ is a Banach space with an unconditional basis, does there
exist an operator $T$ on $X$ with the property that $\norm{T^n(x)}
\rightarrow \infty$ for some $x \in X$, and $\norm{T^n y}
\not\rightarrow \infty$ for all $y$ in a non-empty, open subset of
$X$?
\end{prob}

Also, since the operators which feature above cannot be compact, by
Proposition \ref{compact}, the next question seems natural to us.

\begin{prob}
Is it possible to find a sum $I+T$, where $T$ is compact, which
satisfies the properties given in the abstract? In particular, does
the Argyros-Haydon space \cite{ah:09} admit such an operator?
\end{prob}

If no sum $I+T$, $T$ compact, satisfies the properties given in the
abstract, then this suggests to us that some kind of unconditional
structure is necessary in order to construct such operators.

Finally, we make a remark about the title of this note. The operator
$R$ constructed above can be viewed as a machine which acts on a
countable family of disjoint cycles. This family of disjoint cycles
can be seen as a countable directed graph. We speculate that it may
be possible to construct operators with other interesting properties
by basing them on more complicated directed graphs.

\bibliographystyle{amsplain}

\begin{thebibliography}{10}

\bibitem{ah:09} S.\ A.\ Argyros and R.\ G.\ Haydon, {\em A hereditarily
indecomposable $\mathscr{L}_\infty$-space that solves the
scalar-plus-compact problem}. Preprint. {\tt
http://arxiv.org/abs/0903.3921}

\bibitem{beauzamy:88} B.\ Beauzamy, Introduction to operator theory and
invariant subspaces, North-Holland Mathematical Library Vol.\ 42,
North-Holland, Amsterdam (1988).

\bibitem{fj:74} T.\ Figiel and W.\ B.\ Johnson, {\em A uniformly convex
Banach space which contains no $\lp{p}$}. Composito Math.\
\textbf{29} (1977), 179--190.

\bibitem{muller:01} V.\ M\"{u}ller, Orbits, weak orbits and local capacity of
operators. {\em Integral Equations Operator Theory} \textbf{41}
(2001), 230--253.

\bibitem{muller:04} V.\ M\"{u}ller, Orbits of Operators, in Advanced
courses of mathematical analysis I, Cadiz (2004).

\bibitem{mv:09} V.\ M\"{u}ller and J.\ Vr\v{s}ovsk\'{y}, Orbits of
linear operators tending to infinity. {\em Rocky Mountain J.\ Math.}
\textbf{39} (2009), 219--230.

\bibitem{p:09} G.\ Pr\v{a}jitur\v{a}, {\em The geometry of an
orbit.} Preprint.

\bibitem{r:69} S.\ Rolewicz, {\em On orbits of elements.} Studia Math.
\textbf{32} (1969), 17--22.

\bibitem{tza:74} L.\ Tzafriri,
{\em On Banach spaces with unconditional bases.} Israel\ J.\ Math.\
\textbf{17} (1974), 84--93.
\end{thebibliography}

\end{document}